\documentclass[a4paper, reqno, 11pt]{amsart}

\usepackage[english]{babel}
\usepackage{amsmath}
\usepackage{amssymb}
\usepackage{enumerate}
\usepackage{ifthen}
\usepackage{bbm}
\usepackage{graphicx,subfigure}  
\usepackage{geometry}
\geometry{a4paper,top=3cm,bottom=3cm,left=2.cm,right=2.cm,heightrounded,bindingoffset=5mm}
\provideboolean{shownotes} 
\setboolean{shownotes}{true}
\usepackage{hyperref}
\usepackage{setspace}
\usepackage{soul} 

\newcommand{\hole}[1]{
\ifthenelse{\boolean{shownotes}}%
{\begin{center} \fbox{ \rule {.25cm}{0cm}
\rule[-.1cm]{0cm}{.4cm} \parbox{.85\textwidth}{\begin{center}
\texttt{#1}\end{center}} \rule {.25cm}{0cm}}\end{center}}
{}
}


\newtheorem{theorem}{Theorem}[section]
\newtheorem{proposition}[theorem]{Proposition}
\newtheorem{lemma}[theorem]{Lemma}

\newtheorem{definition}[theorem]{Definition}

\allowdisplaybreaks
\theoremstyle{remark}
\newtheorem{remark}[theorem]{Remark}
\newtheorem{example}[theorem]{Example}

\newcommand{\R}{\mathbb{R}}
\newcommand{\T}{\mathbb{T}^d}
\newcommand{\Tt}{\mathbb{T}^2}
\newcommand{\N}{\mathbb{N}}
\newcommand{\Z}{\mathbb{Z}}
\newcommand{\e}{\varepsilon}

\newcommand{\dive}{\mathop{\mathrm {div}}}
\newcommand{\curl}{\mathop{\mathrm {curl}}}
\newcommand{\weakto}{\rightharpoonup}
\newcommand{\weaktos}{\stackrel{*}{\rightharpoonup}}

\newcommand{\del}{\partial}
\newcommand{\eps}{\varepsilon}
\newcommand{\Tone}{\mathbb{T}^1}
\newcommand{\cE}{\mathcal{E}}

\numberwithin{equation}{section}

%

\begin{document}

\title[\empty]{Existence and uniqueness for a viscoelastic Kelvin-Voigt model with nonconvex stored energy}

\author[K. Koumatos]{Konstantinos Koumatos}
\address[Konstantinos Koumatos]{\newline
Department of Mathematics
\newline
University of Sussex
\newline
Pevensey 2 Building
\newline
Falmer, Brighton, BN1 9QH, UK
}
\email{k.koumatos@sussex.ac.uk}
\author[C. Lattanzio]{Corrado Lattanzio}
\address[Corrado Lattanzio]{\newline 
Dipartimento di Ingegneria e Scienze dell'Informazione e Matematica
\newline
Universit\`a degli Studi dell'Aquila
\newline
Via Vetoio
\newline
I-67010 Coppito (L'Aquila) AQ 
\newline 
Italy
}
\email{corrado@univaq.it}
\author[S. Spirito]{Stefano Spirito}
\address[Stefano Spirito]{\newline 
Department of Information Engineering, Computer Science and Mathematics
\newline
University of L'Aquila
\newline
Via Vetoio
\newline
I-67100 Coppito (L'Aquila), 
Italy
}
\email{stefano.spirito@univaq.it}

\author[A.E. Tzavaras]{Athanasios E. Tzavaras}
\address[Athanasios E. Tzavaras]{
\newline
Computer, Electrical and Mathematical Science and Engineering Division 
\newline
King Abdullah University of Science and Technology (KAUST)
\newline 
Thuwal 23955-6900,  Saudi Arabia
}
\email{athanasios.tzavaras@kaust.edu.sa}

\begin{abstract}

We consider nonlinear viscoelastic materials of Kelvin-Voigt type with stored energies satisfying an Andrews-Ball condition, allowing 
for non convexity in a compact set.  Existence of weak solutions with deformation gradients in $H^1$ is established for energies of any
superquadratic growth. In two space dimensions, weak solutions notably turn out to be unique in this class. 
Conservation of energy for weak solutions in two and three dimensions, as well as global regularity for 
smooth initial data in two dimensions are established under additional mild restrictions on the growth of the stored energy.
\end{abstract}

\maketitle

\section{Introduction}
We consider the Cauchy problem for viscoelastic materials of the strain-rate type in Lagrangean coordinates 
\begin{equation}\label{eq:wave}
\partial_{tt}y-\dive (S(\nabla y))-\Delta \partial_{t}y=0\, , 
\end{equation}
where $y : (0,T) \times \T \to \R^d$, where $\T$ is the $d$-dimensional torus, $d=2,3$ and $T>0$ is arbitrary but finite, and with  initial data 
\begin{equation*}
\begin{aligned}
&y|_{t=0}=y_0,\quad & \partial_{t}y|_{t=0}=v_0 \, .
\end{aligned}
\end{equation*}

This second-order system describes motions of a viscoelastic material of strain-rate type with the Piola-Kirchhoff stress
tensor 
\begin{equation}
\label{eq:constve}
T_R = S(F) + \partial_{t}F \, , \quad S(F) = \frac{\del W}{\del F}(F) \, .
\end{equation}
It is assumed throughout that the elastic part of the stress is given as the gradient of a strain-energy function, $S= DW$,
while the viscous part of the stress is linear, leading to \eqref{eq:constve}. Such constitutive relations fit under the general framework of viscoelasticity
of strain-rate type, and specifically into the class of Kelvin-Voigt type materials; we refer to  Antman \cite[Ch 10, Secs 10-11]{Antman05}
and Lakes \cite[Ch 2]{Lakes09} for general information on viscoelasticity and the specific terminologies. 

In this work, we focus on the model \eqref{eq:constve} with linear dependence on the strain-rate and study the effect of nonlinear elastic behavior on various aspects of the existence theory. Note that the constitutive relation \eqref{eq:constve} violates frame-indifference and we refer the reader to the end of the Introduction for a discussion.


The system \eqref{eq:wave} is expressed as a hyperbolic-parabolic system,
\begin{equation}\label{eq:main}
\begin{aligned}
\partial_{t}v-\dive(S(F))-\Delta\,v&=0\\
\partial_{t}F-\nabla\,v&=0\\
\curl\,F&=0\, ,
\end{aligned}
\end{equation}	
for the functions  $v:(0,T)\times\T\to\R^d$, $F :(0,T)\times\T\to M^{d\times d}$, where  $v = \partial_{t}y$ stands for the velocity
and  $F =\nabla y$ for the deformation gradient. System \eqref{eq:main} is supplemented with periodic boundary conditions and initial data
 at $\{t=0\} \times \T$ given by
 \begin{equation}\label{eq:id}
\begin{aligned}
&v|_{t=0}=v_0, \\
& F|_{t=0}=F_0  = \nabla y_0 \, .
\end{aligned}
\end{equation}
The constraint $\curl\,F=0$ is an involution of the dynamics and it is propagated from the initial data
 $F_0 = \nabla y_0$ by the kinematic compatibility equation $F_t = \nabla v$.

A number of studies concern the issue of existence for viscoelastic models of strain-rate type both for
linear viscosity \cite{Pego87, AB, DF} as well as nonlinear strain-rate dependence \cite{FN88, Demoulini00, Tvedt08, BMR12};
the interested reader may find a thorough review of previous literature in \cite{DF}. 
Our objective is to study the effect of nonlinear elastic response on the existence, uniqueness, and
regularity theory of \eqref{eq:main}. 
We limit attention to \eqref{eq:constve} in several space dimensions, 
and in that sense the most relevant studies are the existence theory of Friesecke-Dolzmann \cite{DF}, and the works concerning
convex stored energies  of Friedman-Ne\v{c}as \cite[Sec 6]{FN88} and Engler \cite{En1,En2}. 
There are available studies for models with nonlinear strain-rate dependence $T_R = S(F,  F_t)$, 
under monotonicity hypotheses for $S(F, \cdot)$ and globally Lipschitz hypotheses that restrict to linear growth in $F$, see 
\cite{FN88, Demoulini00, Tvedt08}.

 In our analysis, we allow for nonconvex strain energy functions $W(F)$ with growth conditions of polynomial type. 
Instead of convexity  we adopt
 the Andrews-Ball condition  \cite{AB, DF} imposing monotonicity at infinity;
namely,  $W(F)$ satisfies for some $R>0$ 
\begin{equation}\label{eq:ab}\tag{AB}
(S(F_1)-S(F_2), F_1-F_2)\geq 0,  \qquad \forall \;  |F_1|, |F_2| \ge R \, ,
\end{equation}
where $(F,G) = {\rm tr}\,FG^T$ denotes the inner product on $\R^{d\times d}$.
On occasion a strengthened version of \eqref{eq:ab} is employed, requesting that there exist constants $C>0$, $R>0$ such that 
\begin{equation}\label{eq:abprime}\tag{AB$^\prime$}
(S(F_1)-S(F_2) , F_1-F_2)\geq   C(|F_1|^{p-2}+|F_2|^{p-2})  |F_1-F_2|^2, \qquad \forall \;  |F_1|, |F_2| \ge R \, .
\end{equation}
Condition \eqref{eq:ab} amounts to requiring the strain energy to be a semiconvex function while, similarly, 
\eqref{eq:abprime} implies a strenghtened  variant of semiconvexity; see \cite{DF} and Lemma \ref{lem:modAB} below.

A-priori bounds for  the system \eqref{eq:main} are provided by the energy identity
\begin{equation*}
\int \tfrac{1}{2} |v|^2 + W(F) \, dx + \int_0^t \int    |\nabla v|^2 \, dx = \int \tfrac{1}{2} |v_0|^2 + W(F_0) \, dx \, ,
\end{equation*}
but clearly they do not suffice to yield an existence theory.
  Friesecke and Dolzmann\cite{DF} in a penetrating study provide global existence of weak solutions for  \eqref{eq:wave} 
under the Andrews-Ball condition \eqref{eq:ab}. Key to their analysis is the property of propagation of compactness for the deformation gradient, \cite[Prop 3.1]{DF}; the latter is complemented with a variational time-discretization scheme 
to achieve existence of weak solutions in energy space. We provide here an example indicating that the system does not enjoy any compactification mechanism, by showing that oscillations in the initial strain can persist in the dynamics.
The example  concerns one dimensional models for phase transitions, 
\begin{equation}
\label{eq:onedvisco}
\begin{aligned}
u_t &= v_x
\\
v_t &= \sigma(u)_x + v_{xx}\, ,
\end{aligned}
\end{equation}
and combines the universal class of uniform shearing solutions with the observation of Pego \cite{Pego87} and Hoff \cite{Hoff87} that
\eqref{eq:onedvisco} admits solutions with discontinuities in the strain and strain-rate. Combining these ingredients,
exact oscillatiory solutions are constructed for \eqref{eq:onedvisco} with non-monotone stress-strain laws. 
Our example corroborates an example of Friesecke and Dolzmann at the level of  approximating solutions for \eqref{eq:wave}, \cite[Example 2.1]{DF},
and indicates that persistent oscillations in the strain is a feature of the viscoelasticity system. 

The property of propagation of compactness can also be seen as  propagation of regularity, which in \cite{DF} is the propagation of the $L^2$-modulus of continuity. In the current work, a crucial observation is that in fact  $H^{1}$-regularity of the initial deformation gradient is also propagated.
Precisely, the following energy bound holds
\begin{equation}
\label{eq:unicab}
\frac{d}{dt} \int   |  v - \tfrac{1}{2} \dive F|^2 + \tfrac{1}{2} |\nabla F |^2+  2 W(F) dx 
+  \int  \Big (  D^2 \tilde W : (\nabla F, \nabla F) +  |\nabla v|^2 \Big ) dx \le K \int |\nabla F|^2 dx\, ,
\end{equation}
for $\tilde W(F):=W(F)+K\frac{|F|^2}{2}$, which is convex because of the condition \eqref{eq:ab}. Then, for data $(v_0,F_0)\in L^{2}(\T)\times H^{1}(\T)$,  we establish
an existence theory for weak solutions $(v,F)$ of \eqref{eq:main}-\eqref{eq:id} such that 
\begin{equation}
\label{classR}
\begin{aligned}
&v\in L^{\infty}(0,T;L^{2}(\T))\cap L^{2}(0,T;H^{1}(\T))\, ,\\
&F\in L^{\infty}(0,T;H^{1}(\T)\cap L^{p}(\T))\, ;
\end{aligned}
\end{equation}
see Theorem \ref{teo:ex}. \par
The propagation of $H^1$-regularity of the deformation gradient has several important implications. First, we prove that solutions satisfying \eqref{classR} conserve the energy in $d=2$ and, if the non-linearity $S$ does not grow too fast, in  $d=3$ as well, see Theorem \ref{teo:energy}. Second, a main consequence of the $H^1$-estimation are the uniqueness and regularity properties for solutions in two dimensions.
Uniqueness results for \eqref{eq:main} or related systems are provided in  \cite{DF, Demoulini00, Tvedt08}, always based on global Lipschitz assumptions for the stress function.  Instead, thanks to the propagation property of $H^1$-regularity, we prove uniqueness of weak solutions $(v,F)$ of class \eqref{classR} for dimension $d=2$ assuming some mild restrictions on the growth of $D^2W$, and thus substantially improving 
upon the global Lipschitz assumptions on $S(F)$.
Indeed, uniqueness is established in Theorem \ref{teo:uni} under hypothesis \eqref{eq:ab} with the growth of $W$ restricted to  $2\leq p < 4$, or 
in Theorem \ref{teo:unip} under the strengthened condition \eqref{eq:abprime} for any growth $p\geq 2$.
Moreover, assuming that $W(F) \sim |F|^p$ for $|F|$ large and using classical energy estimates, based on the Gagliardo-Nirenberg and Brezis-Gallouet inequalities, we establish smoothness of solutions under conditions \eqref{eq:ab} and $2\le p \le 5$ or \eqref{eq:abprime} and $2\le p \le 6$; see Theorem \ref{teo:reg}.

These results highlight a striking analogy with the $2D$ incompressible Euler equations, at least when the non-linearity $S(F)$ does not grow too fast. Indeed, for $2D$ Euler, existence holds in the class of solutions with vorticity in $L^{q}$ with $q\leq\infty$, in analogy to Theorem \ref{teo:ex}.  Moreover, existence and uniqueness holds for solutions with bounded vorticity, in analogy to Theorems \ref{teo:uni} and \ref{teo:unip}, and finally for both systems there is global regularity given smooth initial data, in analogy to Theorem \ref{teo:reg}. We also remark that besides the conceptual similarities just mentioned, the uniqueness and global regularity proofs exploit critical (logarithmic) estimates similar to Yudovich \cite{Y} and Beale, Kato and Majda \cite{BKM}.

Some related material is listed in two appendices.  The key estimate \eqref{eq:unicab}, used in the existence proof, 
can be understood by considering the problem of transfer of dissipation.  
It was noted by DiPerna \cite{DiPerna83} that the estimate on velocity gradients  obtained by energy dissipation in viscoelasticity models
can be transferred to strain gradients; a similar observation holds for relaxation systems \cite{Tzavaras99}. 
Such estimates are here established for several space dimensions, see the modulated energy estimate \eqref{unic}, and is what lies behind 
the existence Theorem \ref{teo:ex}.
Another class of approximations used for elasticity is the so-called diffusion-dispersion approximations that occur when viscosity is
intermixed with higher-order gradient theories. An observation of Slemrod \cite{Slemrod89} indicates that  diffusion-dispersion approximations of one-dimensional elasticity can be transformed to parabolic approximations.
We show that this is also the case in several space dimensions.

\subsection*{Frame Indifference}\label{subsec:1}
 We conclude this introduction with a short discussion on frame-indifference. The constitutive theory of (isothermal) 
viscoelasticity of strain-rate type asserts that the Piola-Kirchhoff stress $T_R = T_R (F, \dot F)$ depends on the deformation gradient $F$ and the strain rate $\dot F$.  The stress is nominally decomposed as 
\begin{equation}
\label{eq:ctv}
T_R = T_{el} (F) + T_v (F, \dot F)\, , 
\end{equation}
 where $\dot\,$ denotes time derivative, $T_{el}$ is the elastic part of the stress, $T_v$ the viscous part, and $T_v (F, 0) = 0$. 
Compatibility with the Clausius-Duhem inequality dictates,
\begin{equation}
\label{eq:cd}
T_{el} (F) = \frac{\del W}{\del F} (F) \, , \quad T_v (F, \dot F) : \dot F \ge 0 \, ,
\end{equation}
that the elastic part is induced by a strain energy (or stored energy) function $W(F)$ while the viscous part is dissipative.
The terminology Kelvin-Voigt model originates from the interpretation of viscoelastic behaviour through systems of spring and dashpot mechanisms,
Lakes \cite[Ch 2]{Lakes09}, 
and Kelvin-Voigt specifically refers to additive decomposition of the elastic and viscous stresses.

In the continuum mechanics literature the principle of material frame indifference is imposed on constitutive theories,
which posits that two observers moving with respect to each other with a Euclidean transformation,
\[
x^* = Q(t) x + d(t), \, \quad t^* = t + a\, , 
\]
with $Q(t)$ an arbitrary proper orthogonal tensor, $Q^T Q= Q Q^T = I$ and $\det Q = 1$, should observe the same constitutive relations in their respective frames of reference \cite{TN}.
When applied to the constitutive theory of viscoelasticity of strain-rate type,  material frame indifference
implies that the Piola-Kirchhoff stress tensor $T_R$ must satisfy 
$T_R(F,\dot F) = F G(C,\dot C)$  for some symmetric tensor-valued function $G$ of $C=F^TF$, 
{\it e.g.} Antman \cite[Ch 10, Secs 10-11]{Antman05}, \cite{Sengul10}.  For  \eqref{eq:ctv}-\eqref{eq:cd} this suggests
\begin{equation*}
T_{el} (F) = 2 F \, \frac{\del \varphi}{\del C} (C) \, , \quad T_v (F, \dot F) = F G_v (C, \dot C)\, , 
\end{equation*}
where $\varphi(C) = W(\sqrt{C})$ and $G_v^T (C, \dot C) = G_v (C, \dot C)$. 

For reasons related to analysis it is beneficial to impose a strengthened version of \eqref{eq:cd} on the
energy dissipation,
assuming that for some $\gamma > 0$
\begin{equation}
\label{eq:sdiss}
T_v (F, \dot F) : \dot F  \ge \gamma |\dot F|^2 \, .
\end{equation}
We outline an argument of {\c S}eng\"ul \cite{Sengul10} showing that \eqref{eq:sdiss} is incompatible with frame indifference.
Indeed, using the symmetry of $G_v$,
\[
T_v (F, \dot F) : \dot F = F G_v (C, \dot C)  : \dot F = G_v (C, \dot C) : F^T \dot F = \frac{1}{2}  G_v (C, \dot C) : \dot C
\]
and \eqref{eq:sdiss} becomes
\[
\frac{1}{2}  G_v (C, \dot C) : \dot C \ge \gamma |\dot F|^2\, .
\]
The latter is violated by the example $F(t) = e^{t \Omega}$ with $\Omega^T = - \Omega$ a constant skew-symmetric matrix,
for which $C(t) = I$, $\dot F = \Omega e^{t \Omega}$ and $|\dot F (t) | = |\Omega| \ne 0$.
This argument indicates that  the  Kelvin-Voigt model in Lagrangian coordinates $T_R(F,\dot F) = S(F) + \dot F$ violates frame-indifference, as also do any reasonable linear viscoelastic models as first noted by Antman \cite{antman}.

The principle of material frame indifference is strictly speaking a hypothesis  imposed on the form of constitutive relations of 
continuum physics. It reflects the intuition that stress results from deformations originating from an unstrained state and it should not be affected
by the superposition of an arbitrary rigid body motion. 
Detractors argue that constitutive relations reflect microscopic dynamics determined via Newton's laws which are only
Galilean invariant. According to this viewpoint, frame indifference might be too restrictive 
and should be replaced by invariance under the Galilean group or the extended Galilean group. 
(The latter only requires that rotations $Q$ are time independent, and in particular this invariance admits the model \eqref{eq:constve}.)
The reader is referred to the very interesting review by Speziale \cite{Speziale98} (and references therein) 
which tests the validity of frame indifference of constitutive relations, 
in a context where fluctuations result from kinetic modeling or from models for turbulence.

%

\subsection*{Organization of the paper} The paper is organized as follows. Section \ref{sec2} lists the hypotheses on the stored energy, discusses their interrelations, and contains the statements of the main results.  Section \ref{sec:tech} contains auxiliary results that are needed later in the proofs. Section \ref{sec4} contains the
proof of the existence of weak solutions, which is based on a  Galerkin approximation and
a compactness argument for the constructed Galerkin iterates using the Aubin-Lions-Simon Lemma.  Section \ref{sec5} contains the uniqueness proof in two space dimensions, while Section \ref{sec6}
the proof of global regularity. Then, in Section \ref{sec:sust} we provide the construction of examples
of one-dimensional oscillating solutions in the nonlinear case with phase transitions, and then in the linear case. Appendix \ref{AppA} contains the
energy estimate indicating the transfer of dissipation, while Appendix \ref{AppB} shows how the multi-dimensional diffusion-dispersion
approximation of the elasticity system can be transformed to a parabolic approximation of conservation laws.  Finally, in Appendix \ref{AppC} we discuss the assumptions on the stored energy, in particular the ones regarding its growth at infinity.

\section{Assumptions on the stored energy and main results}\label{sec2}
In this section we fix the the assumptions on the stored energy $W$ and we present all our main results. 
\subsection{Hypotheses on the stored energy}
We always assume throughout the paper that the stored energy $W$ satisfies for some $p\geq 2$ the following hypotheses:
\begin{itemize}
\item[(A1)] $W\in C^{2}(\R^{d\times d};\R)$.
\item[(A2)] There exists $c>0$ such that 
\begin{equation*}
c(|F|^p-1)\leq W(F).
\end{equation*}
\item[(A3)] There exists $C>0$ such that 
\begin{equation*}
\begin{aligned}
&|W(F)|\leq C(1+|F|^p);\,&|DW(F)|\leq C(1+|F|^{p-1}).\\
\end{aligned}	
\end{equation*}
\end{itemize} 
The following assumption characterizes the dissipative nature of the stored energy:
\begin{itemize}
\item[(A4)] There exists constant $K>0$ such that 
\begin{equation*}
(S(F_1)-S(F_2), F_1-F_2)\geq -K|F_1-F_2|^2.
\end{equation*}
\end{itemize}
In some cases, as mentioned in the Introduction, we consider the following more restrictive assumption:
\begin{itemize}
\item[(A4$^\prime$)]	There exists $C> 0$ and $K>0$ such that for any $F_1$ and $F_2$ we have that 
\begin{equation}\label{eq:A4}
(S(F_1)-S(F_2) , F_1-F_2)\geq   \big ( C (|F_1|^{p-2}+|F_2|^{p-2})-K \big ) |F_1-F_2|^2.
\end{equation}
\end{itemize}
In the next lemma we prove that (A4) and (A4$^\prime$) are direct consequences of \eqref{eq:ab} and \eqref{eq:abprime}, respectively. 
They imply that upon adding a quadratic function of $F$ the stored energy becomes convex.
The latter condition is called semiconvexity, it has geometric implications on the graph of $W$, and
is used extensively in the theory of Hamilton-Jacobi equations.
\begin{lemma}\label{lem:modAB}
Assume that $W$ satisfies (A1)-(A3). Then:
\begin{enumerate}
\item If $W$ satisfies \eqref{eq:ab}, then $W$ satisfies (A4) and 
\begin{equation*}
\tilde{W}(F)=W(F)+\frac{K}{2}|F|^2
\end{equation*}
is convex. In particular, $D^2\tilde{W}\geq 0$.
\item If $W$ satisfies \eqref{eq:abprime}, then $W$ satisfies (A4$^{\prime}$) and
\[
\tilde{W}(F)=:W(F)+\frac{K}2|F|^2
\]
satisfies 
\begin{equation*}
D^2\tilde{W}(F)\geq  c |F|^{p-2}\mathbb{I}.
\end{equation*}
\end{enumerate}
\end{lemma}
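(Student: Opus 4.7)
The plan is to pass from the monotonicity conditions \eqref{eq:ab} and \eqref{eq:abprime} to pointwise lower bounds on the Hessian of the shifted energy $\tilde W$, and then to recover the claimed monotonicity statements (A4) and (A4$^\prime$) by integration along straight segments. Both parts split naturally into the regions $\{|F|\geq R\}$ and $\{|F|\leq R\}$, using \eqref{eq:ab} (resp.\ \eqref{eq:abprime}) on the first and compactness plus the $C^2$ hypothesis (A1) on the second.

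For part (1), since $W\in C^2$, I would first derive a pointwise Hessian bound on $\{|F|\geq R\}$: given $|F|>R$ and $H\in\R^{d\times d}$, choose $\e>0$ small enough that $|F+\e H|>R$, apply \eqref{eq:ab} to $F_1=F$, $F_2=F+\e H$, divide by $\e^2$ and let $\e\to 0$. This yields $D^2 W(F)[H,H]\geq 0$, and continuity extends the bound to $|F|\geq R$. On the complementary compact set $\{|F|\leq R\}$, continuity of $D^2 W$ gives a uniform lower bound, so there exists $K>0$ with $D^2 W(F)+K\,\mathbb{I}\geq 0$ there. Combining, $D^2\tilde W=D^2 W+K\,\mathbb{I}\geq 0$ everywhere, i.e.\ $\tilde W$ is convex. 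The monotonicity of the gradient of a convex function, applied to $D\tilde W(F)=S(F)+KF$, then yields (A4) by direct expansion.

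For part (2), the same passage to the limit in \eqref{eq:abprime} yields $D^2 W(F)[H,H]\geq 2C|F|^{p-2}|H|^2$ on $\{|F|\geq R\}$. On $\{|F|\leq R\}$, the weight $|F|^{p-2}$ is bounded (since $p\geq 2$) and $D^2 W$ is bounded below, so $K$ can be chosen large enough that $D^2 W(F)+K\,\mathbb{I}\geq c|F|^{p-2}\mathbb{I}$ on $\{|F|\leq R\}$ for some $c\in (0,2C]$. Combining the two regions gives the pointwise estimate $D^2\tilde W(F)\geq c|F|^{p-2}\mathbb{I}$ on $\R^{d\times d}$. To deduce (A4$^\prime$), I would write
\[
(S(F_1)-S(F_2),F_1-F_2)=\int_0^1 D^2 W(F_\theta)[F_1-F_2,F_1-F_2]\,d\theta,\qquad F_\theta:=F_2+\theta(F_1-F_2),
\]
substitute $D^2 W=D^2\tilde W-K\,\mathbb{I}$, apply the pointwise Hessian bound, and invoke the elementary inequality $\int_0^1 |F_\theta|^{p-2}\,d\theta\geq c_p\bigl(|F_1|^{p-2}+|F_2|^{p-2}\bigr)$ valid for $p\geq 2$, to recover the form in (A4$^\prime$) after adjusting the constants.

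Neither step is conceptually difficult; the only delicate point is the bookkeeping in part (2), where $K$ must be chosen large enough to dominate $D^2 W$ on $\{|F|\leq R\}$ while still retaining a positive multiple of $|F|^{p-2}$ in the lower bound, forcing the constant $c$ in the conclusion to be strictly smaller than the $C$ of \eqref{eq:abprime}. This is a purely algebraic matter with no analytic obstruction.
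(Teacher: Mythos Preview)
Your argument is correct and complete; the only step worth spelling out is the segment inequality $\int_0^1 |F_\theta|^{p-2}\,d\theta\geq c_p\bigl(|F_1|^{p-2}+|F_2|^{p-2}\bigr)$, which follows from $|F_\theta|\geq \bigl|\theta|F_1|-(1-\theta)|F_2|\bigr|$ and an explicit one-dimensional computation giving $c_p=\tfrac{1}{2(p-1)}$.

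Your route is genuinely different from the paper's. The paper proves (A4$^\prime$) \emph{first}, directly from \eqref{eq:abprime}, by a three-case analysis on the positions of $F_1,F_2$ relative to $B_R(0)$ and $B_{2R}(0)$; in the mixed case it constructs an intermediate point $F_0$ on the segment with $|F_0|=3R/2$, splits the monotonicity expression at $F_0$, and controls the ratio $|F_1-F_0|/|F_1-F_2|$ from below. Only afterwards does it pass to the Hessian bound by the difference-quotient limit (the same limit you use at the outset). You reverse the order: Hessian bound first, then recover (A4$^\prime$) by integrating along the segment. Your approach is cleaner and avoids the intermediate-point geometry, at the cost of needing the auxiliary integral inequality above; the paper's approach is more direct at the level of the monotonicity statement but requires more case bookkeeping. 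Either way the constants degrade compared with \eqref{eq:abprime}, as you note.
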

\begin{proof}
Note that {\em (1)} is proved in \cite[Lemma 1.1]{DF}. Regarding {\em (2)}, and following the ideas in the proof of \cite[Lemma 1.1]{DF}, let $R>0$ be the radius appearing in \eqref{eq:abprime} and consider the balls $B_R(0)$ and $B_{2R}(0)$. If $F_1$, $F_2\notin B_{R}(0)$, then the stronger assertion \eqref{eq:abprime} holds, whereas if $F_1$, $F_2\in B_{2R}(0)$ we find that
\begin{align*}
\left|(S(F_1)-S(F_2) , F_1-F_2) -   C (|F_1|^{p-2}+|F_2|^{p-2})  |F_1-F_2|^2 \right| & \leq K_1 |F_1 - F_2|^2 + 2C(2R)^{p-2}|F_1 - F_2|^2\\
& \leq K |F_1 - F_2|^2,
\end{align*}
where $K_1>0$ is the Lipschitz constant of $S$ on the ball $B_{2R}(0)$. Hence \eqref{eq:A4} holds and we are left to consider the case $F_1\notin B_{2R}(0)$, $F_2\in B_R(0)$, since the remaining case amounts to switching the roles of $F_1$ and $F_2$. Define 
\[
F(\lambda) = F_1 - \lambda \frac{F_1 - F_2}{|F_1 - F_2|}.
\]
Since $F(0) = |F_1| > 2R$ and $|F(|F_1-F_2|)| = |F_2| < R$, there exists $\lambda_0\in (0, |F_1 - F_2|)$ such that
\[
|F(\lambda_0)| = \frac{3R}{2}.
\]
Write $F_0 := F(\lambda_0)$ and note that
\begin{equation}\label{eq:lambda0}
\frac{F_1 - F_0}{\lambda_0} = \frac{F_1 - F_2}{|F_1 - F_2|}.
\end{equation}
We may thus compute that
\begin{align}
(S(F_1)-S(F_2) , F_1-F_2) & = (S(F_1)-S(F_0) , F_1-F_2) + (S(F_0)-S(F_2) , F_1-F_2) \nonumber\\
& = (S(F_1)-S(F_0) , F_1-F_0) \frac{|F_1 - F_2|}{\lambda_0} + (S(F_0)-S(F_2) , F_1-F_2) \nonumber \\
&\geq C (|F_1|^{p-2}+|F_0|^{p-2})  |F_1-F_0|^2  \frac{|F_1 - F_2|}{\lambda_0} \nonumber\\
&\qquad - K_1|F_0 - F_2||F_1 - F_2|,\label{eq:lemmaAB0}
\end{align}
where the last inequality follows from \eqref{eq:abprime} and the fact that $F_1$, $F_0\notin B_R(0)$, whereas $F_0$, $F_2\in B_{2R}(0)$ and $K_1$ denotes again the Lipschitz constant of $S$ on $B_{2R}(0)$. Next, note that
\begin{align}\label{eq:lemmaAB1}
|F_0 - F_2| & \leq \frac{5R}{2} = \frac{5R}{2} \frac{1}{|F_1 - F_2|}|F_1 - F_2| \leq \frac{5}{2} |F_1 - F_2|,
\end{align}
since $|F_1 - F_2|>R$. Moreover, using \eqref{eq:lambda0}, we find that
\begin{align}
\frac{|F_1 - F_0|^2}{\lambda_0} |F_1 - F_2| & = |F_1 - F_0||F_1 - F_2|  = |F_1 - F_2|^2 \frac{|F_1 - F_0|}{|F_1 - F_2|} \label{eq:lemmaAB2}
\end{align}
and we aim to prove that
\begin{equation}\label{eq:lemmaAB3}
\frac{|F_1 - F_0|}{|F_1 - F_2|} \geq L > 0.
\end{equation}
Indeed, as $F_0$, $F_2\in B_{2R}$,
\[
\lim_{|F_1|\to\infty} \frac{|F_1 - F_0|}{|F_1 - F_2|} = 1
\]
and thus there exists $M>0$ such that whenever $|F_1|>M$ it holds that
\[
\frac{|F_1 - F_0|}{|F_1 - F_2|} > 1 -\varepsilon.
\]
On the other hand, when $|F_1|<M$, we find that
\[
\frac{|F_1 - F_0|}{|F_1 - F_2|} \geq \frac{R/2}{M+R}
\]
proving \eqref{eq:lemmaAB3}. Using \eqref{eq:lemmaAB1}-\eqref{eq:lemmaAB3}, and noting that $|F_0|> |F_2|$ we find that \eqref{eq:lemmaAB0} implies 
\[
(S(F_1)-S(F_2) , F_1-F_2) \geq L C (|F_1|^{p-2}+|F_2|^{p-2}) |F_1 - F_2|^2 - \frac{5}{2}K_1|F_1 - F_2|^2
\]
which is \eqref{eq:A4}. To conclude the proof of Lemma \ref{lem:modAB}, we are left to show that \eqref{eq:A4} implies $D^2\tilde{W}(F)\geq  c |F|^{p-2}\mathbb{I}$. Indeed, apply \eqref{eq:A4} to $F_1 = F$, $F_2 = F + t H$ where $t > 0$ and $H$ a matrix to obtain
\[
\Big ( \frac{S( F + t H)-S(F)}{t}  , H  \Big ) \geq   \big [ c (|F + t H |^{p-2}+|F|^{p-2})-K \big ]|H|^2.
\]
Taking $t \to 0$ gives
\[
\Big ( D^2 W(F) H  , H  \Big ) \geq   \big ( 2c |F|^{p-2} - K \big )|H|^2
\]
and implies the result.
\end{proof}

\subsection{Main results}



We start by providing the definition of a weak solution to the Cauchy problem \eqref{eq:main}-\eqref{eq:id}.

\begin{definition}\label{def:ws}
Let $v_0\in L^{2}(\T)$ and $F_0\in L^{p}(\T)$. The pair 
\begin{equation}\label{eq:ws1}
(v,F)\in L^{\infty}(0,T;L^2(\T))\cap L^{2}(0,T;H^1(\T))\times L^{\infty}(0,T;L^{p}(\T))
\end{equation}
is a weak solution to the initial value problem \eqref{eq:main}
if the following are satisfied:
\begin{itemize}
\item For any $\phi, \psi\in C^{\infty}_c([0,T);C^{\infty}(\T))$ 
\begin{equation}\label{eq:ws2}
\begin{aligned}
&\iint\,v\psi_t+S(F)\nabla\psi+\nabla\,v\nabla\psi\,dxdt-\int\,v_0(x)\psi(0,x)\,dx=0\\
&	\iint\,F\phi_t+\nabla\,v\phi\,dxdt-\int\,F_0(x)\phi(0,x)\,dx=0\\
& F(t,x)=\nabla\,y(t,x)\mbox{ for a.e. }(t,x)\in (0,T)\times\T
\end{aligned}	
\end{equation} 
\item The energy inequality holds for a.e. $t\in(0,T)$
\begin{equation}\label{eq:ws3}
\int\,\frac{|v|^2}{2}+W(F)\,dx+\int_0^t\int\,|\nabla\,v|^2\,dxds\leq \int\,\frac{|v_0|^2}{2}+W(F_0)\,dx.
\end{equation}	
\end{itemize}
\end{definition}
The first main result of the paper is the following: 
\begin{theorem}\label{teo:ex}
Assume that $W$ satisfies (A1)-(A4) for some $p\geq 2$. Then, for any initial data $(v_0,F_0)$ with $F_0=\nabla\,y_0$ a.e. in $\T$ and 
\begin{equation}\label{eq:hypid}
(v_0, F_0)\in L^{2}(\T) \times L^{p}(\T),
\end{equation}
there exists at least one weak solution in the sense of Definition \ref{def:ws}. Moreover, 
\begin{enumerate}
\item if $F_0\in H^{1}(\T)$ then $F$ satisfies
\begin{equation*}
F\in L^{\infty}(0,T;H^{1}(\T)).
\end{equation*}
\item if $F_0\in H^{1}(\T)$ and (A4$^\prime$) holds, then $F$ satisfies
\begin{equation}\label{eq:h1estp}
\begin{aligned}
&F\in L^{\infty}(0,T;H^{1}(\T))\\
&\nabla|F|^{\frac{p}{2}}\in L^{2}(0,T;L^2(\T)).
\end{aligned}
\end{equation}
\end{enumerate}
\end{theorem}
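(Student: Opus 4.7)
The plan is to construct weak solutions by a Galerkin approximation on the Fourier modes of $\T$, deduce uniform a priori bounds from the energy identity together with the modulated $H^1$ estimate \eqref{eq:unicab}, extract a strongly convergent subsequence via Aubin--Lions--Simon, and pass to the limit in the weak formulation. The central difficulty is nonconvexity of $W$: one cannot pass to the limit in $S(F_n)$ from weak convergence alone, so strong compactness of $F_n$ must be secured.

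First I would set up the Galerkin scheme with $V_n = \mathrm{span}\{e^{i k\cdot x} : |k| \le n\}$ and initial data $v_n(0) = P_n v_0$, $F_n(0) = P_n F_0$, obtaining a local-in-time solution of the projected system by Cauchy--Lipschitz. Testing the velocity equation against $v_n$ and using $\partial_t F_n = \nabla v_n$ to identify $\int S(F_n):\nabla v_n = \tfrac{d}{dt}\int W(F_n)$ gives the energy identity and, via (A2), uniform bounds $v_n \in L^\infty L^2\cap L^2 H^1$ and $F_n \in L^\infty L^p$; these globalize the Galerkin solution and also yield $\partial_t F_n \in L^2 L^2$.

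When $F_0\in H^1$, I would reproduce \eqref{eq:unicab} at the Galerkin level, using (A4) through convexity of $\tilde W = W + \tfrac{K}{2}|F|^2$ per Lemma \ref{lem:modAB}, and close by Gronwall to obtain $F_n \in L^\infty H^1$ uniformly; under (A4$^\prime$), the dissipative integrand $D^2\tilde W:(\nabla F,\nabla F)$ additionally dominates $c|F|^{p-2}|\nabla F|^2$, yielding the bound $\nabla|F|^{p/2} \in L^2 L^2$ after a chain-rule computation. Combined with $\partial_t F_n \in L^2 L^2$, the Aubin--Lions--Simon lemma produces $F_n \to F$ strongly in $L^2(L^2)$; growth (A3) together with the uniform $L^\infty L^p$ bound and Vitali then upgrades this to convergence of $S(F_n)$ in $L^{p^\prime}(L^{p^\prime})$, which suffices to pass to the limit in every term of the weak formulation. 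The energy inequality comes from weak lower semicontinuity (with (A3) ensuring $W(P_n F_0)\to W(F_0)$).

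For the base case $F_0 \in L^p$ without any $H^1$ control, the estimate above is not uniform in $n$. I would instead approximate $F_0$ by $F_0^\delta \in H^1$ with $F_0^\delta \to F_0$ in $L^p$, produce solutions $(v^\delta, F^\delta)$ in class \eqref{classR} by the previous construction, and take $\delta\to 0$; strong compactness of $F^\delta$ in this step is not provided by an $H^1$ bound and instead follows from the Friesecke--Dolzmann propagation of compactness \cite[Prop.~3.1]{DF}, which relies on the involution $\curl F = 0$ and Andrews--Ball monotonicity at infinity. The hardest conceptual step is the derivation of \eqref{eq:unicab}: it requires the somewhat nonobvious modulated velocity $v - \tfrac{1}{2}\dive F$, whose time derivative — taken in tandem with the kinematic equation — generates both the good $\tfrac{1}{2}|\nabla F|^2$ term and the dissipative $D^2\tilde W$ integrand, with error terms controllable by the Andrews--Ball constant $K$ and absorbable by Gronwall.
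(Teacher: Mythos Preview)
Your proposal is correct and follows essentially the same approach as the paper: Galerkin approximation, energy identity plus the modulated $H^1$ estimate (the paper's Lemma \ref{lem:h1}, i.e.\ \eqref{eq:unicab}), Aubin--Lions--Simon compactness, and passage to the limit. The only minor difference is that for the base case $F_0\in L^p$ the paper spells out the propagation-of-compactness step explicitly as a Gronwall estimate on $\|F-F^n\|_{L^2}^2$ (testing the difference of the equations with $y-y^n$ and using semiconvexity (A4)) rather than citing \cite{DF}, but this is precisely the mechanism you invoke.
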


\begin{remark}
We note that the global existence for initial data $(v_0, F_0)\in L^{2}(\T) \times L^{p}(\T)$ was already proved in \cite{DF}. Instead and to the best of our knowledge, $\it (1)$ and $\it (2)$ are new contributions.
\end{remark}

The next result concerns the conservation of energy. 
\begin{theorem}\label{teo:energy}
Assume that $W$ satisfies (A1)-(A3). For $v_0\in L^{2}(\T)$ and $F_0\in (H^{1}\cap L^{p})(\T)$ let $(v,F)$ be a weak solution in the sense of Definition \ref{def:ws}. Then, if $F\in L^{\infty}(0,T;H^{1}(\T))$,  with $d=2$ and $p\geq 2$ or with $d=3$ and $2\leq p\leq 4$, the weak solution $(v,F)$ verifies for any $t\in[0,T]$
\begin{equation*}
\int\,\frac{|v|^2}{2}+W(F)\,dx+\int_0^t\int\,|\nabla\,v|^2\,dxds=\int\,\frac{|v_0|^2}{2}+W(F_0)\,dx.
\end{equation*}	
\end{theorem}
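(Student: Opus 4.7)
The strategy is to upgrade the energy inequality \eqref{eq:ws3} to an equality by testing the momentum equation with $v$ itself and by establishing a chain-rule identity for $W(F)$ along the weak time evolution of $F$. Both ingredients require enough integrability of the elastic stress $S(F)$, which is precisely what the growth restrictions on $p$ provide.

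First I would establish the auxiliary regularity needed. From $\partial_{t}F=\nabla v\in L^{2}(0,T;L^{2}(\T))$ together with $F\in L^{\infty}(0,T;H^{1}(\T))$, a Lions--Aubin-type argument gives $F\in C([0,T];L^{2}(\T))$ with trace $F(0)=F_{0}$. The Sobolev embeddings on the torus ($H^{1}\hookrightarrow L^{q}$ for every finite $q$ when $d=2$, and $H^{1}\hookrightarrow L^{6}$ when $d=3$) combined with hypothesis (A3) yield
\begin{equation*}
S(F)\in L^{\infty}(0,T;L^{2}(\T)),
\end{equation*}
under the stated dimension/growth conditions. Reading the momentum equation in duality then gives $\partial_{t}v=\dive S(F)+\Delta v\in L^{2}(0,T;H^{-1}(\T))$, which together with $v\in L^{2}(0,T;H^{1}(\T))$ forces $v\in C([0,T];L^{2}(\T))$ with $v(0)=v_{0}$. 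With this regularity in place, a density argument (justified by the Lions identity $\tfrac{1}{2}\frac{d}{dt}\|v\|_{L^{2}}^{2}=\langle\partial_{t}v,v\rangle$) lets me insert $v$ as test function in \eqref{eq:ws2}, producing
\begin{equation*}
\tfrac{1}{2}\|v(t)\|_{L^{2}}^{2}-\tfrac{1}{2}\|v_{0}\|_{L^{2}}^{2}+\int_{0}^{t}\!\!\int S(F):\nabla v\,dx\,ds+\int_{0}^{t}\!\!\int|\nabla v|^{2}\,dx\,ds=0.
\end{equation*}

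In parallel I would establish the chain rule
\begin{equation*}
\int W(F(t))\,dx-\int W(F_{0})\,dx=\int_{0}^{t}\!\!\int S(F):\partial_{t}F\,dx\,ds=\int_{0}^{t}\!\!\int S(F):\nabla v\,dx\,ds,
\end{equation*}
by mollifying $F$ in the time variable (after a trivial extension past $t=0,T$), applying the classical pointwise chain rule to $F_{\varepsilon}$, integrating in $x$ and in $t$, and passing $\varepsilon\to 0$. Adding this to the $v$-identity cancels the $\int S(F):\nabla v$ term and yields the desired equality. The main obstacle is the chain-rule step: one must show $S(F)\cdot\partial_{t}F\in L^{1}((0,T)\times\T)$ and justify the exchange of limit and integral. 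This is exactly where the growth restriction enters --- in $d=3$, the embedding $H^{1}\hookrightarrow L^{6}$ controls $|F|^{p-1}$ in $L^{2}$ only when $p\leq 4$, whereas in $d=2$ the embedding $H^{1}\hookrightarrow L^{q}$ for all finite $q$ imposes no such ceiling. Under these conditions, the convergences $W(F_{\varepsilon}(t))\to W(F(t))$ in $L^{1}(\T)$ and $DW(F_{\varepsilon}):\partial_{t}F_{\varepsilon}\to S(F):\nabla v$ in $L^{1}((0,T)\times\T)$ follow by Hölder, using $F\in L^{\infty}(0,T;L^{2(p-1)}(\T))$ with $F_{\varepsilon}\to F$ in $C([0,T];L^{2}(\T))$ and $\partial_{t}F_{\varepsilon}\to\nabla v$ in $L^{2}((0,T)\times\T)$.
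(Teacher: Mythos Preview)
Your proposal is correct and follows essentially the same route as the paper's proof: both establish $S(F)\in L^{\infty}(0,T;L^{2}(\T))$ from the Sobolev embedding and (A3), deduce $\partial_{t}v\in L^{2}(0,T;H^{-1})$ and invoke the Lions--Magenes lemma to test the momentum equation with $v$, and separately justify the chain rule $\frac{d}{dt}\int W(F)\,dx=\int S(F):\nabla v\,dx$. The only minor difference is that you justify the chain rule by a time-mollification argument, whereas the paper argues directly that $\partial_{t}W(F)=S(F):\partial_{t}F$ holds a.e.\ (since $S(F)$ and $\partial_{t}F=\nabla v$ both lie in $L^{2}$) and then integrates using the Bochner integral; your approach is a standard way of making that step rigorous and is perfectly acceptable.
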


\begin{remark}\label{rem:energy}
The regularity hypothesis  $F\in L^{\infty}(0,T;H^{1}(\T))$ suffices to guarantee  conservation of energy. On the other hand, Hypothesis (A4) is needed in Theorem \ref{teo:ex} for the existence of weak solutions in that class.
It is interesting to note that assuming (A4$^{\prime}$) instead of (A4) does not seem to improve the range of $p$ in the three-dimensional case.
\end{remark}

The next results concern the uniqueness of weak solutions in the two-dimensional case. We additionally assume that the second derivatives of $W$ have polynomial growth. Precisely, we assume that there exists $C>0$ such that 
\begin{equation}\label{eq:growthsecond}
\begin{aligned}
&|D^2W(F)|\leq C(1+|F|^{s}),\mbox{ with }s\geq p-2.
\end{aligned}
\end{equation}
In particular, the lower bound on $s$ in \eqref{eq:growthsecond} is needed for consistency with the coercivity assumption (A2); see Lemma \ref{lem:app1}. The first result concerning the uniqueness deals with assumption $(A4)$.
\begin{theorem}\label{teo:uni}
Assume that $d=2$ and let $\gamma\in[0,2]$. Assume that  $2\leq p<2+\gamma$ and $W$ satisfies (A1)-(A4) and \eqref{eq:growthsecond} with $p-2\leq s\leq p-\gamma$. Then, for any initial data $(v_0,F_0)$ with $F_0=\nabla\,y_0$ for a.e. in $\Tt$ and
\begin{equation*}
(v_0, F_0)\in L^{2}(\Tt) \times H^{1}(\Tt),
\end{equation*}
there exists a unique weak solution such that 
\begin{equation}\label{eq:h1est1}
\begin{aligned}
&F\in L^{\infty}(0,T;H^{1}(\Tt)).
\end{aligned}
\end{equation}
\end{theorem}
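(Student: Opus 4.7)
Let $(v_1,F_1),(v_2,F_2)$ be two weak solutions in the class \eqref{eq:h1est1} with the common initial data $(v_0,F_0)$, set $\bar v := v_1-v_2$, $\bar F := F_1-F_2$, and define $X(t) := \tfrac12 \|\bar v\|_{L^2}^2 + \tfrac12\|\bar F\|_{L^2}^2$, so $X(0)=0$. Since $\bar v$ is periodic and $\bar F(0)=0$, $\bar F$ is mean-zero on $\Tt$ for every $t$. Testing the difference equations $\partial_t\bar v - \Delta\bar v = \dive(S(F_1)-S(F_2))$ and $\partial_t\bar F = \nabla\bar v$ against $\bar v$ and $\bar F$ respectively, and absorbing the harmless term $\int \bar F:\nabla\bar v$ via Young's inequality, yields the differential inequality
\[
\dot X + \tfrac12\|\nabla\bar v\|_{L^2}^2 \le CX + \Big|\int (S(F_1)-S(F_2)):\nabla\bar v\,dx\Big|.
\]

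Controlling the nonlinear term is the crux of the argument. Writing $S(F_1)-S(F_2)=\int_0^1 D^2W(F_2+\tau\bar F)\,d\tau\cdot\bar F$ and using the growth bound \eqref{eq:growthsecond}, one has $|S(F_1)-S(F_2)| \le C(1+h^s)|\bar F|$ with $h := |F_1|+|F_2|$. Cauchy--Schwarz followed by Young's inequality absorbs a share of $\|\nabla\bar v\|_{L^2}^2$ and reduces the matter to estimating $\int h^{2s}|\bar F|^2\,dx$. Here the two-dimensional setting is decisive: from Theorem \ref{teo:ex}, $F_i \in L^\infty(0,T;H^1(\Tt)\cap L^p(\Tt))$, so $h$ lies in every $L^q$ with $q<\infty$; and since $\bar F$ is mean-zero, the two-dimensional Gagliardo--Nirenberg inequality
\[
\|\bar F\|_{L^q}^q \le C\, q\,\|\bar F\|_{L^2}^2\,\|\nabla\bar F\|_{L^2}^{q-2}
\]
is available for every $q\ge 2$. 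A H\"older decomposition with a free dual exponent $u\in (0,2s/p]$ --- chosen so that the $L^p$-bound on $h$ can be invoked without incurring the divergent $q^{1/2}$-growth of the two-dimensional Sobolev constant --- produces
\[
\int h^{2s}|\bar F|^2\,dx \le C_{p,s,\Lambda}\, u^{-s}\,\|\bar F\|_{L^2}^{2-2u}\,\|\nabla\bar F\|_{L^2}^{2u},
\]
and, using the a priori bound $\|\nabla\bar F\|_{L^2}\le 2\Lambda$ from Theorem \ref{teo:ex}, one obtains $\dot X \le C(X + u^{-s}X^{1-u})$ for every admissible $u$.

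Following Yudovich's strategy for 2D Euler and the logarithmic Brezis--Wainger-type estimates alluded to in the introduction, I would then optimise $u=u(X)$ pointwise: the choice $u^\star(X)\sim s/\log(1/X)$ balances $u^{-s}$ against $X^{-u}$ and produces a log-Gronwall inequality $\dot X \le \Phi(X)$ with modulus $\Phi(X) \sim X\bigl(1+\log(1/X)\bigr)^{\alpha}$, where $\alpha = \alpha(p,s,\gamma)$ depends on the detailed balance in the interpolation step. The restrictions $p-2\le s\le p-\gamma$ and $p<2+\gamma$ in the hypothesis are tailored precisely so that $\Phi$ is Osgood-admissible, i.e.\ $\int_{0^+} dX/\Phi(X) = +\infty$; Osgood's uniqueness lemma together with $X(0)=0$ then forces $X\equiv 0$ on $[0,T]$, proving uniqueness. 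The hard step of the plan is this optimisation: extracting a logarithmic (rather than sub-linear power) modulus of continuity requires a delicate combination of the $L^p$-integrability of $F_i$ with the $H^1$-Gagliardo--Nirenberg interpolation of the mean-zero $\bar F$, and the growth restrictions on $s$ and $p$ encode the precise range in which the resulting Osgood modulus remains integrable at zero from above.
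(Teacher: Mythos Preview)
Your energy framework is sound and the Yudovich philosophy is right, but the quantitative balance you obtain does not close for the full range asserted in the theorem. Once you reach the estimate
\[
\int h^{2s}|\bar F|^{2}\,dx \le C\,u^{-s}\,\|\bar F\|_{L^2}^{2-2u}\,\|\nabla\bar F\|_{L^2}^{2u},
\]
the Bihari step gives $X(t)\le (C u^{1-s} t)^{1/u}$ for each fixed $u$, and optimising over $u$ (equivalently, checking the Osgood condition for $\Phi(X)\sim X(\log(1/X))^{s}$) forces $s\le 1$. Your attempt to sidestep the $\sqrt{q}$-growth of the Sobolev constant by using the $L^{p}$-bound on $h$ cannot help: when $u\to 0$ you need $h$ in $L^{2s/u}$ with $2s/u\to\infty$, and any route to such integrability in two dimensions (direct Sobolev, or interpolation between $L^{p}$ and $H^{1}$) reproduces the same $u^{-s}$ prefactor. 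Thus your argument is complete only for $s\le 1$, i.e.\ essentially $p\le 3$, whereas the hypothesis $s\le p-\gamma$, $p<2+\gamma$ allows the full range $s<2$, $p<4$.

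The paper's proof closes this gap by abandoning the $L^{2}$-energy framework for a specific $L^{r}$ one with $r=2/s>1$ (this is where $s<2$ enters). The momentum difference $\partial_t\bar v-\Delta\bar v=\dive(S(F_1)-S(F_2))$ is handled not by testing against $\bar v$ but by the maximal $L^{r}$ parabolic regularity of Lemma~\ref{lem:maxlp}, yielding $\int_0^t\|\nabla\bar v\|_{r}^{r}\lesssim\int_0^t\|S(F_1)-S(F_2)\|_{r}^{r}$. Combined with the $L^{r}$-estimate on $\bar F$, this leads to $\|\bar F\|_r^r\lesssim\int_0^t\int |F_i|^{rs}|\bar F|^{r}$. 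The point of choosing $r=2/s$ is that $rs=2$: the background field $F_i$ then enters only as $\|F_i\|_{L^q}^{2}\lesssim q$, i.e.\ with a \emph{linear} (not $q^{s}$) dependence on the H\"older parameter, and the Bihari/Yudovich argument (Lemma~\ref{lem:bihari} with $q\to\infty$) goes through for every $s<2$. In short, the missing idea is to trade the $L^{2}$-energy identity for maximal parabolic regularity in the tailored exponent $r=2/s$.
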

A more satisfactory result can be obtained by invoking assumption (A4$^{\prime}$).  The statement of uniqueness in this case follows in Theorem \ref{teo:unip}.

\begin{theorem}\label{teo:unip}
Assume that $p\geq 2$ and $d=2$ and $W$ satisfies (A1)-(A4$^{\prime}$) and \eqref{eq:growthsecond} for some $p-2\leq s<p$. Then, for any initial data $(v_0,F_0)$ with $F_0=\nabla\,y_0$ a.e. in $\Tt$ and 
\begin{equation*}
(v_0, F_0)\in L^{2}(\Tt) \times H^{1}(\Tt),
\end{equation*}
there exists a unique weak solution such that 
\begin{equation}\label{eq:h1est1p}
\begin{aligned}
&F\in L^{\infty}(0,T;H^{1}(\Tt))\\
& \nabla|F|^{\frac{p}{2}}\in 
L^{2}(0,T;L^{2}(\Tt)).
\end{aligned}
\end{equation}
\end{theorem}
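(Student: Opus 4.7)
Existence of a weak solution satisfying \eqref{eq:h1est1p} is provided by Theorem \ref{teo:ex}(2); the content of the statement is thus uniqueness. The plan is to take two solutions $(v_1,F_1),(v_2,F_2)$ in the class \eqref{eq:h1est1p} with the same initial data, set the differences $V:=v_1-v_2$, $G:=F_1-F_2$, and $Y:=y_1-y_2$ (so that $G=\nabla Y$, $V=\partial_t Y$, all vanishing at $t=0$), and derive a differential inequality of Gronwall (possibly Osgood) type that forces $V\equiv G\equiv 0$.

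The pair satisfies
\[
\partial_t V-\Delta V=\dive\bigl(S(F_1)-S(F_2)\bigr),\qquad \partial_t G=\nabla V,
\]
and testing this first equation against $V$ gives
\[
\tfrac{1}{2}\tfrac{d}{dt}\|V\|_{L^2}^2+\|\nabla V\|_{L^2}^2=-\int_{\Tt}\bigl(S(F_1)-S(F_2)\bigr):\nabla V\,dx,
\]
while testing against $Y$ (using $\partial_t Y=V$ and integrating by parts) yields the auxiliary identity
\[
\tfrac{d}{dt}\Bigl[\int V\cdot Y\,dx+\tfrac{1}{2}\|G\|_{L^2}^2\Bigr]+\int_{\Tt}\bigl(S(F_1)-S(F_2)\bigr):G\,dx=\|V\|_{L^2}^2.
\]
By (A4$^\prime$), the second integral on the left is bounded below by $c\int(|F_1|^{p-2}+|F_2|^{p-2})|G|^2\,dx-K\|G\|_{L^2}^2$, so a weighted sum of the two identities produces a differential inequality for a functional $\mathcal{F}(t)\sim\|V\|_{L^2}^2+\|G\|_{L^2}^2$ with dissipation $\|\nabla V\|_{L^2}^2+c\!\int(|F_1|^{p-2}+|F_2|^{p-2})|G|^2\,dx$ on the left and error the cross term $\int(S(F_1)-S(F_2)):\nabla V\,dx$ (plus linear-in-$\mathcal{F}$ contributions).

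The cross term is the main technical difficulty. From the representation $S(F_1)-S(F_2)=\int_0^1 D^2W(F_2+\tau G)\,d\tau\cdot G$ and the growth hypothesis \eqref{eq:growthsecond}, one has $|S(F_1)-S(F_2)|\le C(1+|F_1|^s+|F_2|^s)|G|$. A Young splitting of the form
\[
C|F|^s|G|\,|\nabla V|\le \epsilon|F|^{p-2}|G|^2+C_\epsilon|F|^{2s-(p-2)}|\nabla V|^2
\]
moves one part into the coercive dissipation supplied by (A4$^\prime$), while the residual $\int|F|^{2s-(p-2)}|\nabla V|^2\,dx$ must be bounded in terms of $\mathcal{F}$ and an integrable factor. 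Here the strict inequality $s<p$ becomes essential: the 2D embedding $H^1\hookrightarrow L^q$ for every $q<\infty$, sharpened by the extra regularity $\nabla|F|^{p/2}\in L^2(0,T;L^2(\Tt))$ from \eqref{eq:h1est1p} (so that $|F|^{p/2}\in L^2(0,T;L^q(\Tt))$ for every $q<\infty$), combined with a Gagliardo--Nirenberg / Brezis--Gallouet logarithmic interpolation in the spirit of Yudovich's proof for the 2D Euler equation, renders the residual of the form $h(t)\,\phi(\mathcal{F}(t))$ with $h\in L^1(0,T)$ and $\phi$ satisfying Osgood's condition.

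The resulting inequality $\tfrac{d}{dt}\mathcal{F}\le h(t)\,\phi(\mathcal{F})$ with $\mathcal{F}(0)=0$ then forces $\mathcal{F}\equiv 0$ by Osgood's lemma, concluding uniqueness. The chief obstacle throughout is the closure of the cross term at the critical polynomial scaling $s<p$: the absence of a priori $L^\infty$ bounds on $F$ in 2D necessitates logarithmic substitutes, while the weighted coercivity from (A4$^\prime$) is precisely what permits the stronger growth of $D^2W$ allowed here in comparison with Theorem \ref{teo:uni} under (A4) alone.
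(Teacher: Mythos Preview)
Your overall strategy (derive an Osgood/Yudovich-type differential inequality and let $q\to\infty$) matches the paper's in spirit, but the implementation via the $L^2$ energy method does not close for the full range $p-2\le s<p$, and this is a genuine gap.

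The problem is your residual $\int|F|^{2s-(p-2)}|\nabla V|^2\,dx$. The only control you possess on $\nabla V$ is the $L^2$ dissipation on the left; there is no higher integrability of $\nabla V$ available, and $F\notin L^\infty$, so this term cannot be absorbed or bounded by $h(t)\phi(\mathcal F(t))$ as you claim (note $\mathcal F\sim\|V\|_2^2+\|G\|_2^2$ contains no $\nabla V$). If instead you split as $\tfrac12|\nabla V|^2+\tfrac12|F|^{2s}|G|^2$, then H\"older and the critical Sobolev estimate applied to $U=|F|^{p/2}$ give
\[
\int|F|^{2s}|G|^2\,dx\ \lesssim\ q^{2s/p}\,\|U\|_{H^1}^{4s/p}\,\|G\|_2^{2-2/q},
\]
and the Bihari/Yudovich step requires the power of $q$ to be at most $1$, i.e.\ $s\le p/2$. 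So the $L^2$ route recovers only half the claimed range.

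The paper's proof avoids this by \emph{not} using an $L^2$ energy identity for $V$. It applies maximal $L^r$ parabolic regularity for the heat equation (Lemma~\ref{lem:maxlp}) to $\partial_tV-\Delta V=\dive(S(F_1)-S(F_2))$, converting $\|\nabla V\|_{L^r}$ directly into $\|S(F_1)-S(F_2)\|_{L^r}$. Combined with $\partial_tG=\nabla V$ this yields
\[
\|G(t)\|_r^r\ \lesssim\ \int_0^t\|G\|_r^r\,d\tau+\int_0^t\int|F_i|^{rs}|G|^r\,dx\,d\tau.
\]
The key is the choice $r=p/s\in(1,\infty)$ (here $s<p$ is used), which forces $|F_i|^{rs}=|F_i|^{p}=U_i^{2}$. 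Then Lemma~\ref{lem:gani}(ii) gives $\|U_i\|_{2rq}^{2}\lesssim q\,\|U_i\|_{H^1}^{2}$ with \emph{linear} dependence on $q$, and $\|U_i\|_{H^1}^{2}\in L^1(0,T)$ by \eqref{eq:h1est1p}. The Bihari lemma (Lemma~\ref{lem:bihari}) then yields $y(t)\le\bigl(C\int_0^t f\bigr)^{rq}$ with $f\in L^1$, and $q\to\infty$ concludes. The parabolic $L^r$ estimate and the tuned exponent $r=p/s$ are exactly the missing ingredients in your plan.
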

\begin{remark}
As in Remark \ref{rem:energy}, the proofs of  uniqueness in Theorems \ref{teo:uni} and \ref{teo:unip} use only the regularity in \eqref{eq:h1est1} and \eqref{eq:h1est1p}. On the other hand, in view of Theorem \ref{teo:ex}, (A4) (resp.\ (A4$^{\prime}$)) guarantee the existence of weak solutions of
regularity class \eqref{eq:h1est1} (resp.\ \eqref{eq:h1est1p}).  
\end{remark}
The last main result in the present article concerns global regularity. We assume that $W(F) $ behaves  like $|F|^p$ for $|F|$ large. More precisely, we assume that $W\in C^{4}(\R^{2\times2};\R)$ and there exists $C>0$ such that  
\begin{equation}\label{eq:addhyp}
\begin{aligned}
&|D^2W(F)|\leq C(1+|F|^{p-2}),\,\,|D^3W(F)|\leq C(1+|F|^{p-3}),\,\,\,3< p,\\
&|D^2W(F)|\leq C(1+|F|^{p-2}),\,\,|D^3W(F)|\leq C,\,\,\,2< p\leq 3,\\
&|D^2W(F)|\leq C,\,\,|D^3W(F)|\leq C,\,\,\,p=2,
\end{aligned}
\end{equation}
and that also the fourth derivatives have a polynomial growth without a precise order. 
\begin{theorem}\label{teo:reg}
Assume $d=2$ that $W\in C^{4}(\R^{2\times2};\R)$ satisfies (A2), (A3), and \eqref{eq:addhyp}. Then, for any initial data $(v_0,F_0)$ with $F_0=\nabla\,y_0$ a.e. in $\Tt$ and
\begin{equation*}
(v_0,F_0)\in H^{3}(\Tt)\times H^{3}(\Tt),
\end{equation*}
the unique weak solution of \eqref{eq:main}-\eqref{eq:id} constructed in Theorem \ref{teo:uni} satisfies
\begin{equation*}
(v,F)\in L^{\infty}(0,T;H^{3}(\Tt))\times L^{\infty}(0,T;H^{3}(\Tt)),
\end{equation*}
provided $W$ satisfies (A4) and $2\leq p\leq 5$ or $W$ satisfies (A4$^\prime$) and $2\leq p\leq 6$.
\end{theorem}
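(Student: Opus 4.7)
The plan is to establish a priori bounds of $\|v(t)\|_{H^3}^2 + \|F(t)\|_{H^3}^2$ on smooth solutions, implement them on the Galerkin approximations constructed in the proof of Theorem \ref{teo:ex}, pass to the limit via Aubin--Lions, and then invoke the uniqueness Theorems \ref{teo:uni}--\ref{teo:unip} to identify this smoother limit with the weak solution asserted in the statement.

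For the a priori estimate, fix a multi-index $\alpha$ with $|\alpha|=3$, apply $\partial^\alpha$ to the $v$-equation in \eqref{eq:main}, and test with $\partial^\alpha v$. Writing $\partial^\alpha S(F) = D^2W(F)\partial^\alpha F + R_\alpha$ with commutator of the schematic form $R_\alpha \sim D^3W(F)[\partial F,\partial^2 F] + D^4W(F)[\partial F,\partial F,\partial F]$, and using $\nabla\partial^\alpha v = \partial_t\partial^\alpha F$ from the $F$-equation, the principal term integrates to an exact time derivative. One obtains
\[
\tfrac{1}{2}\tfrac{d}{dt}\Bigl(\|\partial^\alpha v\|_{L^2}^2 + \int D^2W(F)[\partial^\alpha F,\partial^\alpha F]\,dx\Bigr) + \|\nabla\partial^\alpha v\|_{L^2}^2 \;=\; J_1 + J_2,
\]
where $J_1 = \tfrac{1}{2}\int D^3W(F)\nabla v\,[\partial^\alpha F,\partial^\alpha F]\,dx$ and $J_2 = -\int R_\alpha\cdot\nabla\partial^\alpha v\,dx$. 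Adding $K\|\partial^\alpha F\|_{L^2}^2$ to the elastic quadratic form and invoking Lemma \ref{lem:modAB}, the coercivity of $\tilde A(F):=D^2W(F)+KI$ gives control of $\|\partial^\alpha F\|_{L^2}^2$ on the left-hand side.

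To close the right-hand side we exploit $d=2$ via Gagliardo--Nirenberg estimates such as $\|\partial f\|_{L^4}^2 \le C\|f\|_{H^1}\|f\|_{H^2}$ and the Brezis--Gallouet inequality $\|f\|_{L^\infty}^2 \le C\|f\|_{H^1}^2\log(e+\|f\|_{H^2}/\|f\|_{H^1})$. Combined with the polynomial growth of $D^jW$ from \eqref{eq:addhyp}, these allow one to bound $J_1$ and $\|R_\alpha\|_{L^2}^2$ by $\epsilon\|\nabla\partial^\alpha v\|_{L^2}^2 + \Phi(\mathcal E(t))\log(e+\mathcal E(t))$, with $\mathcal E(t):=1+\|v\|_{H^3}^2+\|F\|_{H^3}^2$ and $\Phi$ polynomial. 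Summing over $|\alpha|\le 3$ and adding the baseline estimates yields an Osgood-type differential inequality $\mathcal E'(t) \le C\mathcal E(t)\log(e+\mathcal E(t))$, giving double-exponential bounds on $[0,T]$. In the (A4$^\prime$) case, the dissipation-induced estimate $\nabla|F|^{p/2}\in L^2_tL^2_x$ from Theorem \ref{teo:ex}(2) provides additional control on high $L^q$-norms of $F$ and relaxes the critical exponent from $p\le 5$ to $p\le 6$.

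The main technical obstacle is the careful accounting in the commutator estimates: each power of $\|F\|_{L^\infty}^{p-j}$ coming from \eqref{eq:addhyp} must be traded against the $H^2$-log factor from Brezis--Gallouet and against the Gagliardo--Nirenberg interpolation of $\partial^k F$ for $k=1,2$. The most dangerous contribution arises from $D^4W(F)(\partial F)^3$, and the sharp book-keeping of these exponents is what selects the critical thresholds $p\le 5$ under (A4) and $p\le 6$ under (A4$^\prime$); beyond these, the nonlinear terms dominate the parabolic dissipation $\|\nabla\partial^\alpha v\|_{L^2}^2$ and the scheme fails to close.
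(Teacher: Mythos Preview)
Your scheme has a genuine gap at the term $J_1=\tfrac12\int D^3W(F)[\nabla v,\partial^\alpha F,\partial^\alpha F]\,dx$. To place this inside an Osgood inequality $\mathcal E'\le C\,\mathcal E\log(e+\mathcal E)$ you need to bound $D^3W(F)\,\nabla v$ in $L^\infty_x$ by a quantity that is at worst logarithmic in $\mathcal E$. Brezis--Gallouet controls $\|F\|_\infty^{p-3}$ by $(\log(e+\|F\|_{H^2}))^{(p-3)/2}$ thanks to the a priori bound $F\in L^\infty_tH^1$ from Theorem~\ref{teo:ex}. But there is no analogous a priori bound on $v$ beyond $L^\infty_tL^2\cap L^2_tH^1$: you have no uniform-in-time control of $\|\nabla v\|_{H^1}$, so $\|\nabla v\|_{L^\infty}$ costs a full factor of $\mathcal E^{1/2}$ rather than a logarithm. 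This makes $J_1\lesssim \mathcal E^{3/2}(\log\mathcal E)^{(p-3)/2}$, which is superlinear and does not close. The difficulty is structural: differentiating the energy of the quadratic form $\int D^2W(F)[\partial^\alpha F,\partial^\alpha F]$ in time necessarily produces a factor $\partial_t F=\nabla v$, and $\nabla v$ is the quantity you lack pointwise control over.

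The paper avoids this by a two-stage bootstrap. The critical step is an $H^2$ (not $H^3$) estimate obtained by testing the momentum equation with $\Delta\dive F$ rather than with high derivatives of $v$. This produces the commutator $\int D^3W(F)\,(\nabla F)^2\,\nabla^2 F\,dx$, where the lower-order factor is $\nabla F$ (a priori in $L^\infty_tL^2$) instead of $\nabla v$. Then $\|F\|_\infty^{p-3}\|\nabla F\|_4^2\|\nabla^2 F\|_2\lesssim(\log(e+\|\nabla^2 F\|_2))^{(p-3)/2}\|\nabla^2 F\|_2^2$ closes as $y\log y$ precisely when $(p-3)/2\le 1$, i.e.\ $p\le 5$; under (A4$'$) the additional coercivity $D^2\tilde W(F)\ge c|F|^{p-2}\mathbb I$ puts an extra term $\int|F|^{p-2}|\nabla^2 F|^2$ on the dissipation side of the \emph{same} $H^2$ identity, absorbing half of the commutator and relaxing the threshold to $p\le 6$ (your attribution of the gain to the $H^1$-level estimate $\nabla|F|^{p/2}\in L^2_tL^2_x$ is not the mechanism). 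Once $F\in L^\infty_tH^2\hookrightarrow L^\infty_{t,x}$ is in hand, the $H^3$ estimate becomes a linear Gr\"onwall and requires only that $D^4W$ have \emph{some} polynomial growth, so the $D^4W(\partial F)^3$ term you single out is not the one that selects the thresholds.
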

%
%
%

\section{Some technical lemmas}\label{sec:tech}

In this section we recall some classical technical lemmas which play a crucial role in the proofs of our main results.
The first lemma contains some classical interpolation inequality. First, we recall the Gagliardo-Nirenberg-Sobolev interpolation inequality, the critical Sobolev embedding inequality, and we note that the precise constants in {\it (1)} and {\it (2)} are well-known and can be deduced for example from the result in \cite{KW}, see also \cite[Theorem 8.5]{LL} for{\it (2)}. Moreover, in the lemma below we also include a version of the Brezis-Gallouet inequality which can easily be deduced from the original statement in \cite{BG}. The notation $\|\cdot\|_p$ , $p\in[1,\infty]$, denotes the norm of the classical Lebesgue spaces $L^{p}(\T)$. 

\begin{lemma}\label{lem:gani}
Let $f\in H^{1}(\Tt)$, then the following inequalities hold:
\begin{enumerate}
\item For any $r>1$ and $q>1$, there exists a constant $C_r>0$ such that 
\begin{equation*}
\|f\|_{\frac{rq}{q-1}}\leq C_r\|f\|_r^{1-\frac{1}{q}}\|f\|_{H^1}^{\frac{1}{q}}.
\end{equation*}
In particular, for $r=2$ and $q=2$ it holds that
\begin{equation*}
\|f\|_{4}	\leq C\|f\|_2^{\frac{1}{2}}\|f\|_{H^1}^{\frac{1}{2}}.
\end{equation*}
\item There exists a constant $C>0$ such that for any $q>2$
\begin{equation*}
\|f\|_q\leq C\sqrt{q}\|f\|_{H^1}.
\end{equation*}
\item If, in addition, $f\in H^2(\Tt)$, then there exists a constant $C>0$ such that 
\begin{equation*}
\|f\|_{\infty}\leq C(1+\|f\|_{H^1})\left(\log\left(e+\|f\|_{H^2}\right)\right)^{\frac{1}{2}}.
\end{equation*}
\end{enumerate}
\end{lemma}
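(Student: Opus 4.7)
All three estimates are classical Sobolev-type inequalities on $\Tt$ and are attributed in the statement to \cite{KW,BG}; my plan is to give self-contained sketches that also make explicit the parameter dependence appearing in (2) and (3). I would prove (1) and the first part of (2) together as Gagliardo--Nirenberg interpolations, deduce the second displayed estimate of (1) as the special case $r=q=2$, and treat (3) separately via a Fourier split.

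For (1), I note that with $s:=rq/(q-1)$ the target exponents satisfy $\theta=1-1/q=r/s$, so the statement is the standard two-dimensional interpolation $\|f\|_s\le C\|f\|_r^{r/s}\|f\|_{H^1}^{1-r/s}$ for $r<s<\infty$. I would derive it by applying the embedding $W^{1,1}(\Tt)\hookrightarrow L^2(\Tt)$ to $g:=|f|^{s/2}$, yielding
\begin{equation*}
\|f\|_s^{s/2} \;=\; \|g\|_2 \;\le\; C\bigl(\|g\|_1+\|\nabla g\|_1\bigr) \;\le\; C\|f\|_{s/2}^{s/2} + C\tfrac{s}{2}\,\|f\|_{s-2}^{(s-2)/2}\,\|\nabla f\|_2,
\end{equation*}
using $|\nabla g|\le (s/2)|f|^{s/2-1}|\nabla f|$ and Cauchy--Schwarz. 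Interpolating the intermediate norms $\|f\|_{s/2}$ and $\|f\|_{s-2}$ between $\|f\|_r$ and $\|f\|_s$ via H\"older, and absorbing the resulting powers of $\|f\|_s$ into the left-hand side, yields the claim. For (2) I would then specialise to $r=2$ in (1) and iterate dyadically on $q$: each doubling $q\mapsto 2q$ costs only a bounded multiplicative factor in the $H^1$-norm, so after $\log_2 q$ iterations one arrives at $\|f\|_q\lesssim \sqrt{q}\,\|f\|_{H^1}$, which is the sharp Moser--Trudinger scaling.

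For (3) I would expand $f=\sum_{k\in\Z^2}\hat f_k\,e^{ik\cdot x}$, bound $\|f\|_\infty\le\sum_k|\hat f_k|$, and split at a free parameter $N\ge 1$. Cauchy--Schwarz on the low frequencies gives
\begin{equation*}
\sum_{|k|\le N}|\hat f_k| \;\le\; \Bigl(\sum_{|k|\le N}\tfrac{1}{1+|k|^2}\Bigr)^{1/2}\|f\|_{H^1} \;\le\; C(\log N)^{1/2}\|f\|_{H^1},
\end{equation*}
and on the high frequencies
\begin{equation*}
\sum_{|k|>N}|\hat f_k| \;\le\; \Bigl(\sum_{|k|>N}\tfrac{1}{(1+|k|^2)^2}\Bigr)^{1/2}\|f\|_{H^2} \;\le\; \tfrac{C}{N}\|f\|_{H^2}.
\end{equation*}
The integral tests that bound the two sums rely crucially on $d=2$ (the first is logarithmically divergent and the second convergent). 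Choosing $N=1+\|f\|_{H^2}$ balances the two contributions and produces the stated bound. The only genuinely delicate point across the three proofs is maintaining the $\sqrt{q}$ rate in (2): it requires the per-step constant in the dyadic iteration to be $q$-independent so that summing over $\log_2 q$ iterations yields exactly a square-root factor. I would verify this bookkeeping carefully; the Cauchy--Schwarz dichotomy in (3) is then straightforward, and (1) is routine Gagliardo--Nirenberg.
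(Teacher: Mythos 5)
The paper does not actually prove this lemma: parts (1)--(2) are delegated to \cite{KW} and \cite[Theorem 8.5]{LL}, and part (3) to \cite{BG}, so a self-contained argument is a genuine addition. Your proof of (1) is the standard Ladyzhenskaya--Nirenberg argument (apply $W^{1,1}(\Tt)\hookrightarrow L^{2}(\Tt)$ to $|f|^{s/2}$) and is correct for $s\geq 2$; just note that $s=rq/(q-1)$ can be arbitrarily close to $r>1$, so $s-2$ may be negative and the small-$s$ range needs a separate (routine) treatment. Your proof of (3), splitting the Fourier series at $N=1+\|f\|_{H^2}$ with Cauchy--Schwarz on each piece, is exactly the standard two-dimensional derivation of the Brezis--Gallou\"et inequality and is correct.

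The genuine gap is in (2). A dyadic iteration in which each doubling $q\mapsto 2q$ costs a bounded multiplicative factor $C$ produces, after $\log_2 q$ steps, the constant $C^{\log_2 q}=q^{\log_2 C}$; this is $\sqrt q$ only in the accidental case $C=\sqrt 2$, and anything larger gives a worse power of $q$. Moreover the per-step constant is in fact not bounded: the chain-rule factor $s/2$ in your own display forces the doubling step $\|f\|_{2r}\leq C_r\|f\|_r^{1/2}\|f\|_{H^1}^{1/2}$ to carry $C_r\gtrsim \sqrt r$, and even with this ``ideal'' per-step constant the recursion $b_k\leq C\,2^{k/2}b_{k-1}^{1/2}$ (where $b_k=\|f\|_{2^{k+1}}$ and $\|f\|_{H^1}=1$) compounds to $b_k\lesssim 2^{k}$, i.e.\ $\|f\|_q\lesssim q\,\|f\|_{H^1}$ --- a full factor $\sqrt q$ short of the claim. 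This loss is not cosmetic: the $\sqrt q$ rate is precisely what allows the Yudovich-type argument in Section \ref{sec5} to close (a constant $Cq^2$ in place of $Cq$ in $y(t)\leq Crq\int_0^t y^{1-1/(rq)}\,ds$ would destroy the limit $q\to\infty$). A correct one-shot proof on the torus is available via Hausdorff--Young and H\"older in Fourier space:
\begin{equation*}
\|f\|_q\;\leq\;\|\hat f\|_{\ell^{q'}}\;\leq\;\Bigl(\sum_{k\in\Z^2}(1+|k|^2)^{-\frac{q}{q-2}}\Bigr)^{\frac{q-2}{2q}}\,\bigl\|(1+|k|^2)^{1/2}\hat f\bigr\|_{\ell^2},
\end{equation*}
where the lattice sum is $O(q)$ and is raised to a power tending to $1/2$, yielding $C\sqrt q\,\|f\|_{H^1}$ with $C$ independent of $q$; alternatively, cite \cite{KW} or \cite{LL} for the sharp constant, as the paper does.
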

Next, we recall the classical result concerning the maximal $L^p$ regularity for the heat equation on the torus. The result on the entire space can be found in \cite[Chapter IV, Section 3]{LSU}. Here and based on this result, we provide a short proof for the torus, which is also classical but difficult to find in the literature. 
\begin{lemma}\label{lem:maxlp}\mbox{}
For a smooth function $G$, let $u$ be a smooth solution of the following initial value problem:
\begin{equation}\label{eq:heattorus}
\begin{aligned}	
\partial_t u-\Delta u&=\dive G\mbox{ in }(0,T)\times\T\\
u|_{t=0}&=0\mbox{ on }\{t=0\}\times\T.
\end{aligned}
\end{equation}
Then, for any $t\in(0,T)$ and for any $1<r<\infty$
\begin{equation}\label{eq:partorus}
\int_0^t\|\nabla u\|_r^r\,ds\leq C\int_0^t\|G\|_r^r\,ds,
\end{equation}
where $C=C(r,d)$.
\end{lemma}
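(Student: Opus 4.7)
The plan is to reduce the estimate on $\T$ to its $\R^d$ counterpart from \cite[Ch.~IV, \S3]{LSU} by periodization. Fix $\chi\in C_c^\infty(\R^d)$ supported in $(-1,2)^d$ and forming a $\Z^d$-partition of unity, $\sum_{k\in\Z^d}\chi(\cdot+k)\equiv 1$. Denote by $G^\ast$ the $\Z^d$-periodic extension of $G$ to $\R^d$, and set $\tilde G := \chi\, G^\ast$; this is smooth, compactly supported in $x$ uniformly in $t$, and satisfies $\|\tilde G(t,\cdot)\|_{L^r(\R^d)}\le C_\chi\|G(t,\cdot)\|_{L^r(\T)}$.

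Let $\tilde u$ be the classical solution of $\del_t\tilde u-\Delta\tilde u = \dive\tilde G$ on $(0,T)\times\R^d$ with $\tilde u|_{t=0}=0$. The whole-space maximal $L^r$-regularity cited above yields
\[
\int_0^t\|\nabla\tilde u\|_{L^r(\R^d)}^r\,ds \;\le\; C(r,d)\int_0^t\|G\|_{L^r(\T)}^r\,ds.
\]
Define $V(t,x):=\sum_{k\in\Z^d}\tilde u(t,x+k)$; the series converges and is $\Z^d$-periodic in $x$. Using $\sum_k\chi(\cdot+k)\equiv 1$ together with the periodicity of $G^\ast$ one verifies the identity $\sum_k\dive\tilde G(\cdot+k)=\dive G$, so that $V$ solves $\del_t V-\Delta V=\dive G$ on $\T$ with $V|_{t=0}=0$, whence $V=u$ by uniqueness for the heat equation on $\T$.

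It remains to bound $\|\nabla u\|_{L^r(\T)}=\|\nabla V\|_{L^r(\T)}$ by $\|\nabla\tilde u\|_{L^r(\R^d)}$, and this is the main obstacle. A direct application of Minkowski's inequality to $\nabla V=\sum_k\nabla\tilde u(\cdot+k)$ yields an $\ell^1$-sum of $L^r$-norms of $\nabla\tilde u$ over unit lattice cubes, whereas the whole-space bound furnishes only the corresponding $\ell^r$-sum. This discrepancy is overcome by exploiting the Gaussian off-diagonal decay of the heat semigroup applied to the compactly supported source $\tilde G$: outside a bounded dilate of $\mathrm{supp}\,\tilde G$ the function $\nabla\tilde u(t,\cdot)$ is pointwise Gaussian-small, so that the lattice sum reduces effectively to a bounded number of neighbouring sites on which $\ell^1$ and $\ell^r$ norms are equivalent, producing the stated constant $C=C(r,d)$. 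An alternative route, which avoids this transfer step altogether, is to recognize $G\mapsto\nabla u$ as the space-time Fourier multiplier with symbol $m(\tau,\xi)=-4\pi^2(\xi\otimes\xi)/(i\tau+4\pi^2|\xi|^2)$ -- a parabolic Calder\'on-Zygmund symbol whose $L^r(\R\times\R^d)$-boundedness is precisely the $\R^d$ result -- and to transfer to $\R\times\T$ by a de Leeuw-type argument for the restriction to $\Z^d$.
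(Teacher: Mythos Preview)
Your periodization-and-summation strategy is natural, but the Gaussian-decay step hides a real gap. The off-diagonal bound for the heat propagator acting on a source supported in a fixed cube reads
\[
|\nabla\tilde u(t,x)|\;\lesssim\;\int_0^t (t-s)^{-(d+2)/2}\,e^{-c|x|^2/(t-s)}\,\|\tilde G(s)\|_{L^r}\,ds,
\]
so the spatial decay is $e^{-c|x|^2/t}$, \emph{not} uniform in $t$; the solution spreads over $\sim\sqrt{t}$ cells. Passing to the $L^r_t$-norm via Young's convolution inequality gives $\|\nabla\tilde u\|_{L^r((0,T)\times Q_k)}\le \|h_k\|_{L^1_t}\,\|\tilde G\|_{L^r_{t,x}}$ with $h_k(\tau)=\tau^{-(d+2)/2}e^{-c|k|^2/\tau}$, and a direct computation shows $\|h_k\|_{L^1(0,\infty)}=C_d|k|^{-d}$. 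Since $\sum_{k\in\Z^d\setminus\{0\}}|k|^{-d}=\infty$, the $\ell^1$-tail of the lattice sum cannot be controlled by the $\ell^r$-sum with a constant depending only on $(r,d)$; truncating the time integral at $T$ produces instead a factor $\sim\log T$. Thus the sentence ``the lattice sum reduces effectively to a bounded number of neighbouring sites'' is false for large $T$, and your argument does not yield $C=C(r,d)$. The de~Leeuw alternative is correct in spirit, but the version you invoke---transference in the spatial frequency while the time variable remains continuous---is not the classical statement and would itself need to be justified.

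The paper takes a different and shorter route that bypasses the $\ell^1/\ell^r$ mismatch altogether. One extends $u$ and $G$ periodically to $\R^d$, multiplies by a cutoff $\phi_N$ equal to $1$ on the cube $Q^N$ of side $2N$ and supported in $Q^{N+1}$, and applies the whole-space estimate to $u_N:=u\phi_N$. The commutator term $f_N=-2\nabla u\cdot\nabla\phi_N-u\,\Delta\phi_N-G\cdot\nabla\phi_N$ lives in the shell $Q^{N+1}\setminus Q^N$, which contains $O(N^{d-1})$ unit cells, whereas by periodicity the main terms account for $O(N^d)$ cells. Dividing through by $(2N)^d$ and sending $N\to\infty$ annihilates the boundary contribution (whose constant may depend on $u$ and $T$, but multiplies a vanishing factor) and leaves exactly \eqref{eq:partorus} with $C=C(r,d)$. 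This averaging-over-many-periods device is the key idea your proposal is missing.
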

\begin{proof}
We first recall that if $\tilde{G}$ and $\tilde{f}$ are in $C^{\infty}_c((0,T)\times\R^d)$ and $\tilde{u}\in C^{\infty}((0,T)\times\R^d))$ satisfy
\begin{equation*}
\begin{aligned}
\partial_t\tilde u-\Delta\tilde{u}&=\dive \tilde{G}+\tilde f\\
\tilde{u}|_{t=0}&=0
\end{aligned}
\end{equation*}
then it follows that 
\begin{equation}\label{eq:parwholespace}
\int_0^T\|\nabla \tilde u\|_{L^r(\R^d)}^{r}\,dt\leq C(r,d)\left(\int_0^T\|\tilde G\|_{L^r(\R^d)}^r\,dt+\int_0^T\|\tilde f\|_{L^r(\R^d)}^{r}\,dt\right), 
\end{equation}
see \cite[Chapter IV, Section 3]{LSU}. Next, given $u$, $G$ and $f$ as in \eqref{eq:heattorus}, we extend the functions periodically on the whole $\R^d$ and we denote by $Q^{N}$ the cube parallel to the axes, centered at zero and side-length $2N$. Let 
$\phi_N\in C^{\infty}_{c}(\R^d)$ be a cut-off function such that $\phi_N=1$ on $Q^{N}$ and $\mbox{supp}\phi_N\subset Q^{N+1}$. Note that we can also assume $|\nabla\phi_N|\leq C$ and $|\nabla^2\phi_N|\leq C$. 
Let $u_N:=u\phi_N\in C^{\infty}_c([0,T]\times\R^d)$ and note that $u_N$ solves 
\begin{equation*}
\begin{aligned}
\partial_t u_N-\Delta u_N&=\dive G_N+f_N\\
u_N|_{t=0}&=0
\end{aligned}
\end{equation*}
where $G_N:=G\phi_N$ and $f_N:=-2\nabla u\cdot\nabla\phi_N-u\Delta\phi_N-\nabla\phi_N G$. Since 
$\mbox{supp}\nabla\phi_N$ and $\mbox{supp}\nabla^2\phi_N$ are contained in $Q^{N+1}\setminus Q^{N}$, we infer that 
\begin{equation*}
\begin{aligned}
\int_{0}^T\|f_N\|^r_{L^{r}(\R^d)}\,dt&=\int_{0}^T\|f_N\|^r_{L^{r}(Q^{N+1})}\,dt\\
&\leq C(r)((2N+1)^d-(2N)^d)\left(\int_0^T\|G\|_{r}^r+\|\nabla u\|_r^r+\|u\|_r^r\,dt\right)\\
&:=C(r,d,u)( (2N+1)^d-(2N)^d)
\end{aligned}
\end{equation*}
since there are $( (2N+1)^d-(2N)^d)$ cubes with side-length one in $Q^{N+1}\setminus Q^{N}$. Therefore, by using \eqref{eq:parwholespace}, the definition of $\phi_N$ and the periodicity of $u$ we have 
\begin{equation*}
\begin{aligned}
(2N)^d\int_0^T\|\nabla u\|_r^r\,dt&= \int_0^T \|\nabla u\|_{L^{r}(Q^N)}^r\,dt\leq \int_0^T \|\nabla u_{N}\|_{L^{r}(\R^d)}^r\,dt\\
&\leq C(r,d)\int_0^T\|G_N\|_{L^{r}(\R^d)}^r\,dt+C(r,d)\int_0^T\|f_N\|_{L^{r}(\R^d)}^r\,dt\\
&\leq C(r,d)(2N+2)^d\int_0^T\|G\|_{r}^r\,dt+ C(r,d,u)( (2N+1)^d-(2N)^d).
\end{aligned}
\end{equation*}
Now \eqref{eq:partorus} follows by sending $N\to\infty$. 
\end{proof}

We conclude this section by recalling the following classical generalization of the Gr\"onwall lemma. 
\begin{lemma}\label{lem:bihari}
Let $q>1$ and $C>0$. Suppose $F \in L^1(0,T)$ and that $0\leq y\in C([0,T])$ satisfies the inequality
\begin{equation}\label{eq:bihari}
 y(t) \leq Cq \int_{0}^t F(s) y(s)^{1-\frac{1}{q}}\,ds
\end{equation}
with $y(0)=0$. Then, 
\begin{equation}\label{eq:bihari2}
y(t)\leq\left(C\int_0^t\,F(s)\,ds\right)^{q}.
\end{equation}
\end{lemma}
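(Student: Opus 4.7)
The plan is to reduce the implicit inequality \eqref{eq:bihari} to an explicit differential inequality for an auxiliary primitive, and then integrate. Set
\[
Y(t) := \int_0^t F(s)\, y(s)^{1-\frac{1}{q}}\, ds.
\]
Since $y \in C([0,T])$ is bounded and $F \in L^1(0,T)$, the integrand belongs to $L^1(0,T)$, so $Y$ is absolutely continuous and nondecreasing on $[0,T]$, with $Y(0)=0$ and $Y'(t) = F(t) y(t)^{1-1/q}$ almost everywhere. Rewriting \eqref{eq:bihari} as $y(t)\le Cq\, Y(t)$ and substituting into the expression for $Y'$, one obtains the pointwise a.e.\ inequality
\[
Y'(t) \le (Cq)^{1-\frac{1}{q}} F(t)\, Y(t)^{1-\frac{1}{q}}.
\]

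The main (minor) obstacle is that $Y$ may vanish on an initial interval, so one cannot directly divide by $Y^{1-1/q}$. To bypass this I would regularize: fix $\e>0$ and set $Y_\e(t):=Y(t)+\e$. Then $Y_\e \ge \e>0$ is absolutely continuous with $Y_\e'=Y'$, and the above inequality implies
\[
Y_\e'(t)\, Y_\e(t)^{-(1-\frac{1}{q})} \le (Cq)^{1-\frac{1}{q}} F(t)\quad\text{a.e.}
\]
The left-hand side equals $\frac{d}{dt}\bigl( q\, Y_\e(t)^{1/q}\bigr)$, a derivative well defined a.e.\ because $z\mapsto z^{1/q}$ is $C^1$ on $[\e,\infty)$ and $Y_\e$ is absolutely continuous.

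Integrating from $0$ to $t$ gives
\[
q\, Y_\e(t)^{\frac{1}{q}} - q\,\e^{\frac{1}{q}} \le (Cq)^{1-\frac{1}{q}} \int_0^t F(s)\, ds,
\]
and sending $\e\to 0^+$ yields
\[
Y(t)^{\frac{1}{q}} \le C^{1-\frac{1}{q}} q^{-\frac{1}{q}} \int_0^t F(s)\, ds,
\qquad\text{i.e.}\qquad
Y(t) \le \frac{C^{q-1}}{q}\left(\int_0^t F(s)\, ds\right)^{q}.
\]
Inserting this back into $y(t)\le Cq\, Y(t)$ produces \eqref{eq:bihari2}. No additional hypotheses (such as monotonicity of $F$ or positivity of $y$ on an interval) are needed, and the continuity of $y$ is used only to ensure integrability of $Fy^{1-1/q}$.
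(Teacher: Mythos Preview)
Your argument is correct and follows essentially the same route as the paper's proof: define the primitive $Y(t)=\int_0^t F\,y^{1-1/q}$, derive the differential inequality $Y' \le (Cq)^{1-1/q} F\, Y^{1-1/q}$, integrate to bound $Y^{1/q}$, and substitute back. Your $\e$-regularization to justify dividing by $Y^{1-1/q}$ near points where $Y$ vanishes is a careful touch the paper omits, but otherwise the two proofs are the same.
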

\begin{proof}
Define $$R(t):= \int_{0}^t F(s) y(s)^{1-\frac{1}{q}}\,ds.$$ By using \eqref{eq:bihari} and the fact that $R$ is absolutely continuous it follows that 
\begin{equation*}
\frac{d}{dt}R(t)\leq F(t)(CqR(t))^{1-\frac{1}{q}}
\end{equation*}
and then 
\begin{equation*}
\frac{d}{dt}R(t)^{\frac{1}{q}}\leq \frac{1}{(Cq)^{\frac{1}{q}}}CF(t).
\end{equation*}
Integrating in time and using the definition of $R$ and \eqref{eq:bihari}, the inequality \eqref{eq:bihari2} follows. 
\end{proof} 

\section{Global existence of weak solutions and conservation of energy}\label{sec4}

In this section we give the proof of Theorem \ref{teo:ex}. We start by defining the approximation system for \eqref{eq:main}-\eqref{eq:id} which is given by a simple Fourier based Galerkin scheme, we refer to \cite{H} for the Galerkin scheme in a general bounded domain $\Omega\subset\R^d$ for the system  \eqref{eq:wave}.

\subsection{The Galerkin Approximation}

Let $N\in\N$ and consider the following initial value problem 
\begin{equation}\label{eq:app}
\begin{aligned}
\partial_t\,v^N-\dive\,P^N(S(F^N))-\Delta\,v^N&=0\\
\partial_t\,F^N-\nabla\,v^N&=0\\
\curl\,F^N &=0,
\end{aligned}
\end{equation}
with initial data
\begin{equation}\label{eq:id2}
\begin{aligned}
&v^N|_{t=0}=v^N_0\\
&F^N|_{t=0}=F^N_0=\nabla\,y^N_0
\end{aligned}
\end{equation}
where the unknowns $(v^N,F^N)$ are defined for $k\in\Z^d$ as
\begin{equation*}
\begin{aligned}
&v^N=\sum_{|k|\leq N}v^N_k(t)e^{ix\cdot k}\\
&F^N=\sum_{|k|\leq N}F^N_k(t)e^{ix\cdot k}= \sum_{|k|\leq N}i\,y^N_k(t)\otimes\,ke^{ix\cdot k}
\end{aligned}
\end{equation*}
with $v^N_k= \bar{v}^N_{-k}$, $F^N_k= \bar{F}^N_{-k}$ and $P^{N}:L^{2}(\T)\to\, P^N(L^2(\T)) $ is the projection operator from $L^{2}(\T)$ to the finite dimensional subspace $P^N(L^2(\T))$ given by the formula
\begin{equation*}
\begin{aligned}
P^N:L^2(\T)&\rightarrow & P^N(L^2(\T))\\
v&\rightarrow & \sum_{|k|\leq N}v_k(t)e^{ix\cdot k}
\end{aligned}
\end{equation*}
where $v_k(t) = \frac{1}{|\T|}\int v(t,x) e^{-ix\cdot k}\,dx$ are the Fourier coefficients of $v$ satisfying $v_k= \bar{v}_{-k}$.

We have the following Proposition. 
\begin{proposition}\label{prop:exgal}
Let $(v_0, F_0)\in L^{2}(\T)\times L^{p}(\T)$ and define $(v_0^N,F_0^N)=(P^Nv_0,P^NF_0)$. Then, for any $N\in\N$ the system  \eqref{eq:app}-\eqref{eq:id2} admits a unique smooth solution.
\end{proposition}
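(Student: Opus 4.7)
The strategy is standard for Galerkin schemes with nonlinear stress: recast \eqref{eq:app}--\eqref{eq:id2} as a finite-dimensional ODE on the space of Fourier coefficients, obtain local existence and uniqueness by Picard--Lindel\"of, and then extend globally by a Galerkin-level energy identity combined with the equivalence of norms on the finite-dimensional range of $P^N$.

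\textbf{Reduction to an ODE.} Because $P^N(L^2(\T))$ is finite-dimensional and $F^N$ is represented in the form $F^N_k = i y^N_k \otimes k$, the evolution \eqref{eq:app} is equivalent to the system of ODEs
\begin{equation*}
\frac{d}{dt} v^N_k = - k \otimes k \, v^N_k + i k \cdot (S(F^N))^{\widehat{}}_k\, , \qquad \frac{d}{dt} y^N_k = v^N_k\, , \qquad |k|\le N,
\end{equation*}
(with $\partial_t y^N_0$ chosen freely) where $(S(F^N))^{\widehat{}}_k := \frac{1}{|\T|}\int S(F^N(x))e^{-ik\cdot x}\,dx$ depends on the coefficient vector $\{F^N_j\}_{|j|\le N}$. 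By (A1), $S=DW\in C^{1}$, and the map $\{F^N_j\}\mapsto (S(F^N))^{\widehat{}}_k$ is $C^1$; thus the right-hand side of the system is $C^1$ in the unknowns, subject to the reality constraints $v^N_{-k} = \overline{v^N_k}$ and $y^N_{-k} = \overline{y^N_k}$. The involution $\curl F^N=0$ is automatically satisfied by the ansatz $F^N_k = iy^N_k\otimes k$.

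\textbf{Local existence and smoothness.} The Picard--Lindel\"of theorem then yields a unique $C^1$ solution on a maximal interval $[0,T^*_N)\subset [0,T]$. Since (A1) makes the vector field as smooth as the successive derivatives of $W$ allow, bootstrapping (using that $v^N,F^N$ inherit the time regularity of their derivatives through the ODE) promotes the solution to the smoothness stated in the proposition, and spatial smoothness is immediate from the trigonometric-polynomial representation.

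\textbf{Global extension via the Galerkin energy identity.} The central observation is that on $P^N(L^2(\T))$ the projector $P^N$ is the $L^2$-orthogonal projection, hence self-adjoint, and $\nabla v^N\in P^N(L^2(\T))$. Testing the first equation of \eqref{eq:app} with $v^N$ and integrating by parts gives
\begin{equation*}
\frac{d}{dt}\int \tfrac{1}{2}|v^N|^2\,dx + \int P^N(S(F^N)):\nabla v^N\,dx + \int |\nabla v^N|^2\,dx = 0,
\end{equation*}
and since $\int P^N(S(F^N)):\nabla v^N\,dx = \int S(F^N):P^N(\nabla v^N)\,dx = \int S(F^N):\partial_t F^N\,dx = \frac{d}{dt}\int W(F^N)\,dx$ by the second equation of \eqref{eq:app} and the chain rule, we obtain
\begin{equation*}
\frac{d}{dt}\int \tfrac{1}{2}|v^N|^2 + W(F^N)\,dx + \int |\nabla v^N|^2\,dx = 0.
\end{equation*}
Using (A2) and the continuity of the projection on $L^2$ and $L^p$ for the initial data, this bounds $\|v^N(t)\|_{L^2}$ and $\|F^N(t)\|_{L^p}$ uniformly on $[0,T^*_N)$; on the finite-dimensional range of $P^N$ these norms are equivalent to the Euclidean norm of the coefficient vector, which rules out finite-time blow-up and therefore forces $T^*_N=T$. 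The only delicate point is the justification that the projection commutes with the integration-by-parts and chain-rule manipulations above, but this reduces to the self-adjointness of $P^N$ and the inclusion $\nabla v^N\in P^N(L^2(\T))$, which is transparent from the Fourier representation.
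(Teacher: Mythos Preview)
Your proof is correct and follows essentially the same route as the paper: reduction to a finite-dimensional ODE via Fourier coefficients, local existence by Picard--Lindel\"of (the paper invokes Cauchy--Lipschitz using (A1) and (A3)), and global continuation via the Galerkin energy identity together with equivalence of norms on the finite-dimensional range of $P^N$ (the paper phrases this via Parseval). The only cosmetic difference is that you derive the energy identity by moving $P^N$ across the inner product using self-adjointness and $\nabla v^N\in P^N(L^2)$, whereas the paper multiplies the second Galerkin equation by $P^N(S(F^N))$; the two computations are equivalent.
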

\begin{proof}
We first note that \eqref{eq:app}-\eqref{eq:id2} is equivalent to the following system of ordinary differential equations:
\begin{equation*}
\begin{aligned}
&\frac{d}{dt}v^N_k(t)-i S_k^N(t)k + |k|^2v^N_k(t)=0\\
&\frac{d}{dt}F^{N}_{k}(t)-i v^N_k(t)\otimes k = 0
\end{aligned}
\end{equation*}
where	
\begin{equation*}
S_k^N(t)=\int S\left(\sum_{|k|\leq N}F^N_k(t)e^{ik\cdot x}\right)e^{ik\cdot x}\,dx.
\end{equation*}
By Assumptions (A1), (A3) and the Cauchy-Lipschitz theorem we deduce that for any $N\in\N$ there exists $T^N>0$ and $(v^{N}, F^N)$ solution of \eqref{eq:app}-\eqref{eq:id2} of the form 
\begin{equation*}
\begin{aligned}
&v^N=\sum_{|k|\leq N}v^N_k(t)e^{ix\cdot k}\quad
&F^N=\sum_{|k|\leq N}F^N_k(t)e^{ix\cdot k}
\end{aligned}
\end{equation*}
and smooth on $(0,T^N)\times\T$. Next, we prove that $T^N=T$. By multiplying the first equation of \eqref{eq:app} by $v^N$, after integrating in space we get 
\begin{equation}\label{eq:a1}
\frac{d}{dt}\int\frac{|v^N|^2}{2}\,dx+\int P^N(S(F^N):\nabla\,v^N\,dx+\int|\nabla v^N|^2\,dx=0.
\end{equation}
Then, by multiplying the second equation of \eqref{eq:app} by $P^N(S(F^N))$ and using that 
$DW=S$ we have, after summing with \eqref{eq:a1}, that
\begin{equation}\label{eq:eeapp}
\frac{d}{dt}\int\left(\frac{|v^N|^2}{2}+W(F^N)\right)\,dx+\int|\nabla v^N|^2\,dx=0.
\end{equation}
By integrating in time, using (A2), (A3), the fact that $(v_0, F_0)\in L^{2}(\T)\times L^{p}(\T)$, and Parseval's identity we infer that 
\begin{equation*}
\sup_t\left(\sum_{|k|\leq N}|v^N_k(t)|^2+|F^N_k(t)|^2\right)\leq C
\end{equation*}
with $C>0$ not depending on $N$. Therefore, by standard O.D.E. theory it follows that 
$T^N=T$.  
\end{proof}

\subsection{Proof of Theorem \ref{teo:ex}}

The following lemma contains the main estimate of the paper, resulting in the propagation of $H^1$ regularity. We stress that the inequality \eqref{eq:h1estgal} in Lemma \ref{lem:h1} is a mere rephrasing of the {\em a priori } estimate \eqref{eq:unicab}. We also remark that constants depending on fixed parameters, e.g. the domain, dimension or fixed exponents, will be suppressed from appearing in inequalities. Instead, we adopt the notation $\lesssim$, meaning that all the terms on the right-hand side of $\lesssim$ are multiplied by constants depending only on the data except the ones where the constant is explicit. 
 
\begin{lemma}\label{lem:h1}
Assume that $W$ satisfies  (A1)-(A4) for some $p\geq 2$ and let $(v^N, F^N)$ be the solution of the Galerkin approximation constructed in Proposition \ref{prop:exgal}. Then, for any $t\in(0,T)$
\begin{equation}\label{eq:h1estgal}
\begin{aligned}
\int|\nabla\,F^N(t,x)|^2\,dx&\lesssim\int_0^t\int|\nabla v^N|^2\,dx\,ds+\sup_t\int|v^N(t,x)|^2\,dx\\
&\qquad +\int_0^t\int|\nabla F^N|^2\,dx\,ds+\int|v^N_0|^2\,dx+\int|\nabla F^N_0|^2\,dx. 
\end{aligned}
\end{equation}	
In addition, if $W$ satisfies (A4$^{\prime}$), it holds that 
\begin{equation}\label{eq:h1estgalp}
\begin{aligned}
\int|\nabla\,F^N(t,x)|^2\,dx&+\int_0^t\int|\nabla|F^N|^\frac{p}{2}|^2\,dx\,ds\lesssim\int_0^t\int|\nabla v^N|^2\,dx\,ds+\sup_t\int|v^N(t,x)|^2\,dx\\
&\qquad +\int_0^t\int|\nabla F^N|^2\,dx\,ds+\int|v^N_0|^2\,dx+\int|\nabla F^N_0|^2\,dx. 
\end{aligned}
\end{equation}	

\end{lemma}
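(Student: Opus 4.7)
The plan is to derive \eqref{eq:h1estgal} by testing the momentum Galerkin equation with the non-standard test function $-\dive F^N$. This is a legitimate test because the spectral projection $P^N$ commutes with $\dive$, so $\dive F^N$ lies in the Galerkin subspace $P^N(L^2)$; self-adjointness of $P^N$ will then eliminate the projection from the stress term. Two structural observations will be crucial: from $\partial_t F^N = \nabla v^N$ one obtains $\partial_t \dive F^N = \Delta v^N$, and the Galerkin iterates preserve $\curl F^N = 0$ (visible from the representation $F^N_k = i\, y^N_k \otimes k$), so $\nabla \dive F^N = \Delta F^N$ componentwise. The latter identity is what makes the relevant integrations by parts close.

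Testing the momentum equation with $-\dive F^N$, the time term produces $-\frac{d}{dt}\int v^N \cdot \dive F^N \, dx - \int |\nabla v^N|^2 \, dx$ after an integration by parts in $t$ and the substitution $\partial_t \dive F^N = \Delta v^N$; the elastic term becomes $\int D^2 W(F^N)[\nabla F^N, \nabla F^N]\, dx$ after two integrations by parts combined with the curl-free identity (one to pass $\dive$ onto $\dive F^N$ and rewrite as $-\int S(F^N):\Delta F^N\, dx$, the second to expand $\partial_k S(F^N) = D^2W(F^N)[\partial_k F^N,\cdot]$); and the viscous term reduces to $\tfrac{1}{2}\frac{d}{dt} \int |\nabla F^N|^2\, dx$ after one integration by parts and the kinematic identity $\nabla v^N = \partial_t F^N$. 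Summing yields the key differential identity
\[
\frac{d}{dt}\int\left[\tfrac{1}{2}|\nabla F^N|^2 - v^N \cdot \dive F^N\right] dx + \int D^2 W(F^N)[\nabla F^N, \nabla F^N]\, dx = \int |\nabla v^N|^2 \, dx.
\]

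Under (A4), Lemma \ref{lem:modAB}(1) gives $D^2 W \ge -K \mathbb{I}$, so the elastic term is bounded below by $-K\int |\nabla F^N|^2\, dx$. Integrating in time and using this lower bound, the left-hand side contains $\int[\tfrac{1}{2}|\nabla F^N(t)|^2 - v^N(t)\cdot \dive F^N(t)]\, dx$. Applying Young's inequality $v^N \cdot \dive F^N \le \tfrac{1}{4}|\dive F^N|^2 + |v^N|^2$ together with the identity $\int|\dive F^N|^2 \, dx = \int |\nabla F^N|^2 \, dx$ (valid on the torus because $\curl F^N = 0$ forces each row of $F^N$ to be the gradient of a periodic scalar, for which the $L^2$ norms of the Hessian and the Laplacian coincide by Parseval), we isolate $\tfrac{1}{4}\int|\nabla F^N(t)|^2\, dx$ on the left and arrive at \eqref{eq:h1estgal} after bounding the initial-data cross term by the same Young-type estimate.

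For \eqref{eq:h1estgalp} the only change is that under (A4$^\prime$), Lemma \ref{lem:modAB}(2) yields $D^2 W(F^N)[\nabla F^N, \nabla F^N] \ge c|F^N|^{p-2}|\nabla F^N|^2 - K|\nabla F^N|^2$, and the pointwise inequality $|\nabla |F|^{p/2}|^2 \lesssim |F|^{p-2}|\nabla F|^2$ then produces the extra dissipative term $\int_0^t \int |\nabla |F^N|^{p/2}|^2\, dx\, ds$ on the left-hand side after time integration, while the rest of the argument is unchanged. The main obstacle is identifying the correct test function $-\dive F^N$ and executing the double integration by parts in the elastic term: one must carefully exploit $\curl F^N = 0$ to convert $\nabla \dive F^N$ into $\Delta F^N$, so that the resulting quadratic form is precisely $D^2 W$ contracted with $\nabla F^N \otimes \nabla F^N$, which is exactly the object controlled by the Andrews-Ball hypothesis.
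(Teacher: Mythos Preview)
Your proof is correct and follows essentially the same route as the paper: test the momentum equation with $-\dive F^N$, use $\curl F^N=0$ and $\partial_t F^N=\nabla v^N$ to obtain the differential identity for $\tfrac12|\nabla F^N|^2 - v^N\cdot\dive F^N$, then invoke the semiconvexity bound $D^2W\geq -K\mathbb{I}$ (the paper equivalently splits $S=\tilde S-KF$ and uses $D^2\tilde W\geq 0$), integrate in time, and apply Young's inequality. The only cosmetic difference is that you use the Parseval identity $\int|\dive F^N|^2=\int|\nabla F^N|^2$ explicitly to absorb the cross term, whereas the paper simply bounds $|\dive F^N|$ by $|\nabla F^N|$ and absorbs via Young; both are fine.
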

\begin{proof}
By multiplying the first equation of \eqref{eq:app} by $-\dive F^N$ and integrating by parts we get
\begin{equation*}
-\int\,v^N_t\dive F^N\,dx+\int\dive(P^N(S(F^N))\dive F^N\,dx+\int\Delta\,v^N\dive\,F^N\,dx=0.
\end{equation*}
By using the second equation and the fact that $F^N$ is a gradient, after standard manipulations, it follows that 
\begin{equation*}
\frac{d}{dt}\int\left(\frac12 |\nabla\,F^N|^2-v^N\dive F^N\right)\,dx+ \int\dive(P^N(S(F^N))\dive F^N\,dx=\int|\nabla\,v^N|^2\,dx
\end{equation*}
and by using the definition of $\tilde{W}$ we have 
\begin{equation}\label{eq:h1est1.1}
\begin{aligned}
\frac{d}{dt}\int\left(\frac12 |\nabla\,F^N|^2-v^N\dive F^N\right)\,dx&+ \int\dive(P^N(\tilde{S}(F^N))\dive F^N\,dx=\int|\nabla\,v^N|^2\,dx\\
&+K\int|\dive\,F^N|^2\,dx.
\end{aligned}
\end{equation}
Regarding the third term on the left-hand side, by integrating by parts twice and using that $\curl\,F^N = 0$ we infer that
\begin{equation}\label{eq:convexity}
\int\dive(P^N(\tilde{S}(F^N))\dive F^N\,dx=\int\frac{\partial^2\tilde{W}(F^N)}{\partial F_{i\beta}^N\partial F_{j\gamma}^N}\partial_{\alpha} F_{i\beta}^N \partial_{\alpha} F_{j\gamma}^N\,dx\geq 0
\end{equation}
where $\tilde{S}=D\tilde{W}$ and we have used that $\tilde{W}$ is convex and therefore $D^{2}\tilde{W}\geq 0$.
Then, after integrating in time \eqref{eq:h1est1.1} and using \eqref{eq:convexity} we find that
\begin{equation*}
\begin{aligned}
\int|\nabla\,F^N|^2\,dx&\lesssim\int_0^t\int|\nabla\,v^N|^2\,dx\,ds+K\int_0^t\int|\dive\,F^N|^2\,dx\,ds\\
&\quad +\int|v^N||\dive F^N|\,dx
+\int|v^N_0||\dive F^N_0|\,dx+\int|\nabla\,F^N_0|^2\,dx.
\end{aligned}
\end{equation*}
Therefore, \eqref{eq:h1estgal} follows by a simple application of Young's inequality. Regarding \eqref{eq:h1estgalp} it is enough to note that, owing to assumption (A4$^{\prime}$), Lemma \ref{lem:modAB} implies that \eqref{eq:convexity} reads as 
\begin{equation*}
\int|\nabla|F^N|^\frac{p}{2}|^2\,dx\lesssim\int\frac{\partial^2\tilde{W}(F^N)}{\partial F_{i\beta}^N\partial F_{j\gamma}^N}\partial_{\alpha} F_{i\beta}^N \partial_{\alpha} F_{j\gamma}^N\,dx.
\end{equation*}
\end{proof}

We are now in a position to prove Theorem \ref{teo:ex}. 

\begin{proof}[Proof of Theorem \ref{teo:ex}]

We divide the proof in two steps.\\

\noindent{\em Step 1: Global existence for initial data $(v_0, F_0)$ in $L^2(\T)\times H^1(\T)$}.\\

We first prove the existence for initial data $(v_0, F_0)$ in $L^2(\T)\times H^1(\T)$. Note that by recalling hypothesis \eqref{eq:hypid}, equality \eqref{eq:eeapp} and Lemma \ref{lem:h1} we infer that 
\begin{equation*}
\begin{aligned}
&	\{v^N\}_N\subset L^{\infty}(0,T;L^{2}(\T))\cap L^{2}(0,T;H^{1}(\T))\\
&	\{F^N\}_N\subset L^{\infty}(0,T;H^1(\T))
\end{aligned}
\end{equation*}
with uniform bounds. Moreover, by using \eqref{eq:app} we also have that
\begin{equation*}
\begin{aligned}
&	\{\partial_t v^N\}_N\subset L^{2}(0,T;H^{-s}(\T)),\,\,s>\frac{d}{2}+1\\\
&	\{\partial_t F^N\}_N\subset L^{2}(0,T;L^2(\T)),
\end{aligned}
\end{equation*}
also with uniform bounds. Therefore, a simple application of the Aubin-Lions-Simon lemma \cite{Simon85} gives the existence of $(v,F)$ such that
\begin{equation*}
\begin{aligned}
&	v\in L^{\infty}(0,T;L^{2}(\T))\cap L^{2}(0,T;H^{1}(\T))\\
&	F\in L^{\infty}(0,T;H^1(\T))\cap C([0,T);L^{2}(\T)) 
\end{aligned}
\end{equation*}
and 
\begin{equation}\label{eq:conv_strong}
\begin{aligned}
&v^N\rightarrow v\mbox{ in }L^{2}(0,T;L^2(\T))\\
&F^N\rightarrow F\in C([0,T);L^r(\T))
\end{aligned}
\end{equation}
with $r<6$ if $d=3$ and $r<\infty$ if $d=2$.
Next, for any $\phi\in C^{\infty}_c([0,T);C^{\infty}(\T))$, it follows that 
\begin{equation}\label{eq:convtest}
P^{N}\phi\rightarrow\phi\mbox{ in }L^q((0,T)\times\T),\mbox{ for any }q<\infty,
\end{equation}
while, due to assumption (A3) and \eqref{eq:conv_strong}, we find that
\begin{equation}\label{eq:convstress}
S(F^N)\rightarrow S(F)\mbox{ in }L^{r} ((0,T)\times\T)\mbox{ for any }r<\frac{p}{p-1}.
\end{equation}
Then, combining \eqref{eq:conv_strong}-\eqref{eq:convstress} it is fairly straightforward to 
prove that $(v,F)$ is a weak solution of 
\eqref{eq:main}-\eqref{eq:id} in the sense of Definition \ref{def:ws}. Moreover, if $W$ satisfies assumption (A4$^{\prime}$), by Lemma \ref{lem:h1} we have that 
\begin{equation*}
\{\nabla|F^N|^{\frac{p}{2}}\}_N\subset L^{2}(0,T;L^{2}(\T)) 
\end{equation*}
which together with the bounds of $\{F^N\}$ in $L^\infty(0,T;H^1(\T))$ and the strong convergence in \eqref{eq:conv_strong} implies \eqref{eq:h1estp}.\\

\noindent{\em Step 2: Global existence for initial data $(v_0, F_0)\in L^{2}\times\,L^p(\T)$: Propagation of compactness.}\\

Next, we consider the case of initial data $(v_0, F_0)\in L^{2}\times\,L^p(\T)$. We note that since $p\geq 2$ given $F_0\in\,L^p(\T)$, there always exists a sequence of 
$\{F_0^n\}_n\subset\,H^{1}(\T)$ such that 
\begin{equation*}
\begin{aligned}
&F_0^n\rightarrow\,F_0\mbox{ in }L^{2}(\T)\\
&\sup_{n}\|F_0^n\|_p\leq C.
\end{aligned}	
\end{equation*}
From the previous part of the proof, we can claim the following: for any $n\in \mathbb{N}$, there exists $(v^n, F^n)$ weak solution of \eqref{eq:main} with initial data $(v_0, F_0^n)$ satisfying the following uniform bounds
\begin{equation}\label{eq:unifbd1}
\begin{aligned}
&	\{v^n\}_n\subset L^{\infty}(0,T;L^{2}(\T))\cap L^{2}(0,T;H^{1}(\T))\\
&	\{F^n\}_n\subset L^{\infty}(0,T;L^p(\T)).
\end{aligned}
\end{equation}
Moreover, we find that 
\begin{equation*}
\begin{aligned}
&	\{\partial_t v^n\}_n\subset L^{2}(0,T;W^{-1,\frac{p}{p-1}}(\T)),
\end{aligned}
\end{equation*}
also with uniform bounds. Therefore, proceeding as before, a simple application of the Aubin-Lions-Simon lemma gives the existence of $(v,F)$ such that
\begin{equation*}
\begin{aligned}
&	v\in L^{\infty}(0,T;L^{2}(\T))\cap L^{2}(0,T;H^{1}(\T))\\
&	F\in L^{\infty}(0,T;L^p(\T))
\end{aligned}
\end{equation*}
and 
\begin{equation}\label{eq:conv}
\begin{aligned}
&v^n\rightarrow v\mbox{ in }L^{2}(0,T;L^2(\T))\\
&F^n\weaktos F\in L^{\infty}(0,T;L^p(\T)).
\end{aligned}
\end{equation}
Next, note that 
\begin{equation*}
\{\partial_t F^n\}_n\subset L^{2}(0,T;L^2(\T))
\end{equation*}
and thus $F\in C([0,T];L^2(\T))$ and 
\begin{equation}\label{eq:td}
\frac{d}{dt}\int\,\frac{|F|^2}{2}\,dx=\int\,\partial_t\,F\,F\,dx.
\end{equation}
Indeed, this follows since $F$, $\partial_t F\in L^2(0,T;L^2(\T))$. Then, $F\in H^1(0,T;L^2(\T))$ which embeds into $C([0,T];L^2(\T))$ as $\|\int_s^\tau \partial_t F\|_2 \leq \int_s^\tau \|\partial_t F\|_2$.
Next, we note that by (A3) and the uniform bound of $\{F^n\}_n$ in 
$L^{\infty}(0,T;L^p(\T))$ we have that
\begin{equation}\label{eq:wsconv}
S(F_n)\weakto\overline{S(F)}\mbox{ in }L^{\frac{p}{p-1}}(\T)
\end{equation}
where $\overline{S(F)}$ denotes the weak limit of $S(F^n)$ which, at this moment, may not be equal to $S(F)$. In particular, testing with $\theta(t)\phi(t,x)$ where $\dot{\theta}$ is a function localising at time $t$, we infer that for a.a $t\in (0,T)$, $(v,F)$ satisfies 
\begin{equation}\label{eq:weaklim}
\iint\,v\partial_t\phi\,dx\,ds-\iint\,\overline{S(F)}\nabla\phi\,dx\,ds-\iint\nabla\,v\nabla\phi\,dx\,ds
-\int v(t)\phi(t)\,dx+\int\,v(0)\phi(0)\,dx=0
\end{equation}
for any $\phi\in C^{\infty}([0,T];C^{\infty}(\T))$.
Similarly, \eqref{eq:weaklim} holds for $(v^n, F^n)$. Then, subtracting the equations for $(v^n,F^n)$ from \eqref{eq:weaklim}, noting that $v^n_0 = v_0$, by a simple density argument, we may test with $\phi = y - y^n$ to infer that
\begin{equation}\label{eq:FDest0}
\begin{aligned}
\int\,\frac12 |F-F^n|^2\,dx+ & \int_0^t\int\,(\overline{S(F)}-S(F^n))(F-F^n)\,dx\,ds=\int_0^t\int|v-v^n|^2\,dx\,ds\\
& -\int(v-v^n)(y-y^n)\,dx+\int \frac12 |F_0-F^n_0|^2\,dx,
\end{aligned}
\end{equation}
where we have also made use of \eqref{eq:td}. For $\tilde{S}(F) = S(F) + KF$, \eqref{eq:FDest0} becomes
\begin{equation*}
\begin{aligned}
&\int\,\frac12 |F-F^n|^2\,dx+\int_0^t\int\,(\tilde{S}(F)-\tilde{S}(F^n))(F-F^n)\,dx\,ds=\int_0^t\int|v-v^n|^2\,dx\,ds\\
&-\int(v-v^n)(y-y^n)\,dx+\int \frac12 |F_0-F^n_0|^2\,dx+\int_0^t\int(S(F) - \overline{S(F)})(F-F^n)\,dx\,ds\\
&+ K\int_0^t\int|F-F^n|^2\,dx\,ds
\end{aligned}
\end{equation*}	
By assumption (A4) and \eqref{eq:unifbd1} we find that
\begin{equation*}
\begin{aligned}
\int\,|F-F^n|^2\,dx & \lesssim \int_0^t\int|v-v^n|^2\,dx\,ds+\left(\int|y-y^n|^2\,dx\right)^{\frac{1}{2}}\\
&\quad +\int|F_0-F^n_0|^2\,dx+\left|\int_0^t\int\,(S(F) - \overline{S(F)})(F-F^n)\,dx\,ds\right|\\
&\qquad +\int_0^t\int|F-F^n|^2\,dx\,ds,
\end{aligned}
\end{equation*}	
where the second term on the right-hand side is obtained via H\"older's inequality and the bound $v$, $v^n \in L^\infty(0,T;L^2(\T))$ in \eqref{eq:unifbd1}. Next, define
\begin{equation*}
\kappa(t):=\limsup_{n\to\infty}\int|F(t)-F^n(t)|^2\,dx.
\end{equation*}
Then, $\kappa(0)=0$, and note that 
\begin{equation*}
\begin{aligned}
&v^n\rightarrow v\mbox{ in }L^{2}(0,T;L^2(\T)),\\
&y^n\rightarrow y\mbox{ in }C(0,T;L^2(\T)).
\end{aligned}
\end{equation*}
Moreover, due to \eqref{eq:conv} and \eqref{eq:wsconv}, 
\begin{equation*}
\int_0^t\int\,(S(F) - \overline{S(F)})(F-F^n)\,dx\,ds\rightarrow 0\mbox{ as }n\to\infty
\end{equation*}
and we obtain that 
\begin{equation*}
\kappa(t)\leq \int_0^t\,\kappa(s)\,ds.
\end{equation*}
By Gr\"onwall's inequality we conclude that $\kappa(t)=0$ and therefore 
\begin{equation}\label{eq:strongf}
F_n\rightarrow\,F\mbox{ in } L^{2}(0,T;L^2(\T)).
\end{equation}
From \eqref{eq:strongf} we deduce that $\overline{S(F)}=S(F)$ and thus $(v,F)$ is a weak solution of \eqref{eq:main}-\eqref{eq:id}.
\end{proof}

\subsection{Proof of Theorem \ref{teo:energy}}

We prove the theorem concerning the conservation of energy. 

\begin{proof}[Proof of Theorem \ref{teo:energy}]
Let $(v,F)$ be a weak solution of the system \eqref{eq:main}-\eqref{eq:id} in the sense of Definition \ref{def:ws} such that 
\begin{equation}\label{eq:cons1}
F\in L^{\infty}(0,T;H^{1}(\T)). 
\end{equation}
By Sobolev embedding we get 
\begin{equation*}
\begin{aligned}
& F\in L^{\infty}(0,T;L^r(\T)),\mbox{ for }d=2\mbox{ and for all }r<\infty,\\
& F\in L^{\infty}(0,T;L^6(\T))\mbox{ for }d=3.
\end{aligned}
\end{equation*}
Then by using the growth condition of $S$ in (A2) we have that 
\begin{equation}\label{eq:cons3}
S(F)\in L^{\infty}(0,T;L^{2}(\T))
\end{equation}
for any $p\geq 2$ in the case $d=2$ and for $2\leq p\leq 4$ in the case $d=3$.\par
Next, the weak formulation \eqref{eq:ws1}, \eqref{eq:ws2} and \eqref{eq:cons3} imply that
\begin{align}
&\partial_t\,F=\nabla v,\mbox{ a.e. on $(0,T)\times\T$}\label{eq:cons4}\\
&\partial_t v=\dive(S(F)+\nabla u)\mbox{ in }L^{2}(0,T;H^{-1}(\T)),\label{eq:cons5}
\end{align}
where $H^{-1}(\T):=(H^{1}(\T))^{\prime}$. First, we note that regarding the time chain-rule for $W(F)$, by using the bounds \eqref{eq:cons4}, \eqref{eq:cons1} and  \eqref{eq:ws1}, and by using the weak formulation \eqref{eq:ws2}, we can infer that $F$ can be re-defined on a set of measure zero in time so that $F\in C([0,T];L^{2}(\T))$ and 
\begin{equation*}
\begin{aligned}
&\lim_{t\to 0}F(t)=F_0\mbox{ strongly in }L^2(\T),\\
&\lim_{t\to 0}F(t)=F_0\mbox{ weakly in }H^{1}(\T).
\end{aligned}
\end{equation*}
Then, assumption (A2) on the growth of $S(F)$ and the fact that $F\in C([0,T];L^{2}(\T))$ imply that 
$W(F(\cdot))\in C([0,T];L^{1}(\T))$ and $W(F(t))\to W(F_0)$ in $L^{1}(\T)$ as $t\to 0$. Finally, again by using \eqref{eq:cons4}, \eqref{eq:ws1} and \eqref{eq:cons3} we have that $\partial_t W(F)=S(F):\partial_t F$ a.e. in $(0,T)\times\T$ and then  $\partial_t W(F)\in L^1(0,T;L^{1}(\T))$. Then, by standard properties of the Bochner integral we have that 
\begin{equation}\label{eq:cons6}
\int W(F(t))\,dx-\int W(F_0)\,dx=\int\,S(F):\nabla v.
\end{equation}
Next, regarding the velocity, we have that  \eqref{eq:ws2}, \eqref{eq:ws3}, \eqref{eq:cons5}, and \eqref{eq:cons3} imply that $\partial_tv\in L^{2}(0,T;H^{-1}(\T))$ and therefore,  by the Lions-Magenes Lemma, after a possible re-definition on a set of measure zero in time, we  have that $v\in C([0,T];L^{2}(\T))$ and then from \eqref{eq:ws2} it follows that
\begin{equation*}
\lim_{t\to 0}v(t)=v_0\mbox{ strongly in }L^{2}(\T). 
\end{equation*}
Moreover, again from the Lions-Magenes Lemma, we have that 
\begin{equation*}
\begin{aligned}
&\frac{d}{dt}\int\,\frac{|v|^2}{2}\,dx= \langle\partial_t v, v\rangle_{H^{-1} , H^{1}}
\end{aligned}
\end{equation*}
and then 
\begin{equation}\label{eq:cons7}
\int\frac{|v(t)|^2}{2}\,dx-\int\frac{|v_0|^2}{2}\,dx=-\int\left(S(F)+\nabla v\right):\nabla v\,dx.
\end{equation}
Combining \eqref{eq:cons6} and \eqref{eq:cons7} completes the proof.  
\end{proof}

\section{Uniqueness in two dimensions}\label{sec5}

In this section, we restrict to two space dimensions and prove Theorem \ref{teo:uni} and Theorem \ref{teo:unip}. 

\begin{proof}[Proof of Theorem \ref{teo:uni}]

Given the initial data $(v_0,F_0)$ satisfying the hypothesis of Theorem \ref{teo:uni}, the existence follows from the Theorem \ref{teo:ex}. Regarding uniqueness, let $(v_1,F_1)$ and $(v_2,F_2)$ be two weak solutions \eqref{eq:main}-\eqref{eq:id} and note that from Theorem \ref{teo:ex} we have that  
\begin{equation}\label{eq:61}
\begin{aligned}
& v\in L^{\infty}(0,T;L^2(\Tt))\cap L^{2}(0,T;H^{1}(\Tt)),\\
& F\in C(0,T;L^{r}(\Tt))\cap L^{\infty}(0,T;H^{1}(\Tt)),\mbox{ for any }r<\infty.
\end{aligned}
\end{equation}
Since $\nabla\,v\in L^{2}(0,T;L^2(\Tt))$ the second equation in \eqref{eq:main} is satisfied almost everywhere on $(0,T)\times\Tt$ and, therefore, by an approximation argument and \eqref{eq:61}, we deduce that for any $1<r<\infty$ and any $t\in (0,T)$
\begin{equation}\label{eq:62}
\int\,|F_1-F_2|^r\,dx\lesssim \int_0^t\int|\nabla\,v_1-\nabla\,v_2| |F_1-F_2|^{r-1}\,dx\,ds
\end{equation}
Next, by setting $w=v_1-v_2$ we infer that
\begin{align*}
\partial_t w-\Delta w&=\dive(S(F_1)-S(F_2))\\
w|_{t=0}&=0
\end{align*}
weakly on $(0,T)\times\Tt$. Then, given $1<r<\infty$, by Lemma \ref{lem:maxlp}, we find that for any $t\in(0,T)$
\begin{equation}\label{eq:63}
\int_0^t\|\nabla\,v_1-\nabla\,v_2\|_{r}^{r}\,ds\lesssim \int_0^t \|S(F_1)-S(F_2)\|_{r}^{r}\,ds.
\end{equation}
In particular, by using Young's inequality and combining \eqref{eq:62} and \eqref{eq:63}, we get that 
\begin{equation}\label{eq:64}
\begin{aligned}
\int\,|F_1-F_2|^r\,dx&\lesssim \int_0^t\int|F_1-F_2|^{r}\,dx\,ds+\int_0^t\int|S(F_1)-S(F_2)|^{r}\,dx\,ds.
\end{aligned}
\end{equation}
Next, as a consequence of \eqref{eq:growthsecond}, we have that
\begin{equation*}
|S(F_1)-S(F_2)| \lesssim (1+|F_1|^{s}+|F_2|^{s})|F_1-F_2|
\end{equation*}
and then \eqref{eq:64} becomes
\begin{align}
\int\,|F_1-F_2|^r\,dx&\lesssim \int_0^t\int|F_1-F_2|^{r}\,dx\,ds\nonumber\\
&\quad +\int_0^t\int|F_1|^{rs}|F_1-F_2|^{r}\,dx\,ds\nonumber\\
&\quad +\int_0^t\int|F_2|^{rs}|F_1-F_2|^{r}\,dx\,ds\nonumber\\
&\qquad =I_1+I_2+I_3. \label{eq:65}
\end{align}

We now estimate the term $I_2$.  Let $q>1$ and since  
$\tfrac{1}{r q}+ \tfrac{q-1}{r q} + \tfrac{r-1}{r} = 1$ H\"older's inequality implies
\begin{align*}
I_2&\leq \int_0^t\int|F_1|^{rs}|F_1-F_2| |F_1-F_2|^{r-1}\,ds\\
   &\leq \int_0^t\|F_1\|_{r^2qs}^{rs}\| F_1-F_2\|_{\frac{rq}{q-1}}\|F_1-F_2\|_{r}^{r-1}\,ds \, .
  \end{align*}
Note that since $0\leq \gamma\leq 2$, $2\leq p<2+\gamma$, and $p-2\leq s\leq p-\gamma$ we have that $\frac{2}{s}>1$ and then by choosing $r=\frac{2}{s}$ and by using consecutively  Lemma \ref{lem:gani} (i) and (ii), we infer that  
\begin{align*}
I_2 &\leq \int_0^t\|F_1\|_{\frac{4q}{s}}^{2}\| F_1-F_2\|_{\frac{rq}{q-1}}\|F_1-F_2\|_{r}^{r-1}\,ds\\
   &\lesssim q\int_0^t\|\nabla\,F_1\|^{2}_2\|F_1-F_2\|_{r}^{1-\frac{1}{q}}
   \|\nabla\,F_1-\nabla\,F_2\|_{2}^{\frac{1}{q}} \|F_1-F_2\|_{r}^{r-1}\,ds,
\end{align*}
where, the suppressed constant depends on $s$, $p$ but not $q$. Then, since $F_1,F_2\in L^{\infty}(0,T;H^{1}(\Tt))$, we deduce that
\begin{equation*}
I_2\lesssim q \int_0^t\,\|F_1-F_2\|_{r}^{r-\frac{1}{q}}\,ds.
\end{equation*}
Arguing in the exact same way for $I_3$ we get from \eqref{eq:65} that
\begin{align*}
\|F_1-F_2\|_{r}^r&\lesssim \int_0^t \|F_1-F_2\|_{r}^r\,ds+q \int_0^t\|F_1-F_2\|_{r}^{r-\frac{1}{q}}\,ds\nonumber\\
&\lesssim q\, (\sup_t\|F_1 - F_2\|^{\frac1q}_{r} + 1)  \int_0^t\|F_1-F_2\|_{r}^{r-\frac{1}{q}}\,ds\nonumber\\
&\lesssim q \int_0^t\|F_1-F_2\|_{r}^{r-\frac{1}{q}}\,ds
\end{align*}
where we have used again that $F_1,F_2\in L^{\infty}(0,T;H^{1}(\Tt))$.

Next, set $y(t)= \|F_1(t)-F_2(t)\|_{r}^r$ and notice that $y(t)\in C([0,T])$; then, for any $q>1$ it holds that 
\begin{equation*}
y(t)\leq Crq\int_0^t\,y(s)^{1-\frac{1}{rq}}\,ds,\,\,y(0)=0,
\end{equation*}
where we recall that $r=\frac{2}{s}$ is fixed and $C$ does not depend on $q$. Then, the fact that $y(t)=0$ on $(0,T)$ follows from Lemma \ref{lem:bihari}, which implies that $0\leq y(t)\leq \bar{y}(t)=(Ct)^{rq}$ with $C$ independent of $q$. In particular, since $\bar y$ is a non-decreasing function, we may choose $t^*\in (0,T)$ such that for all $q>1$
\[
y(t) \leq \left(\frac12\right)^q,\,\,\forall\,t\in(0,t^*).
\]
Taking $q\to\infty$ we deduce that $y\equiv 0$ on $(0,t^*)$ and, by repeating the argument, that $y\equiv 0$ on $(0,T)$.

Therefore, $F_1=F_2$ and $v_1-v_2$ solves in the weak sense 
\begin{equation*}
\begin{aligned}
\partial_t(v_1-v_2)-\Delta(v_1-v_2)&=0\\
(v_1-v_2)|_{t=0}&=0.
\end{aligned}
\end{equation*}
Hence, $v_1=v_2$ due to standard uniqueness results for the heat equation.
\end{proof}

We next proceed with the proof of Theorem \ref{teo:unip}. This follows by an argument similar to Theorem \ref{teo:uni} and we only show the few modifications required.

\begin{proof}[Proof of Theorem \ref{teo:unip}]
As in Theorem \ref{teo:uni}, given $(v_0,F_0)$ as in the statement of Theorem \ref{teo:unip} the existence part follows by Theorem \ref{teo:ex}. Regarding uniqueness, arguing exactly as in the proof of Theorem \ref{teo:uni}
we get \eqref{eq:65}, which we rewrite for the reader's convenience 
\begin{equation*}
\begin{aligned}
\int\,|F_1-F_2|^r\,dx&\lesssim \int_0^t\int|F_1-F_2|^{r}\,dx\,ds\\
&\quad +\int_0^t\int|F_1|^{rs}|F_1-F_2|^{r}\,dx\,ds\\
&\qquad +\int_0^t\int|F_2|^{rs}|F_1-F_2|^{r}\,dx\,ds\\
&\qquad =I_1+I_2+I_3.
\end{aligned}
\end{equation*}
To estimate the term $I_2$, set $U_i:=|F_i|^{\frac{p}{2}}$, so that

\begin{equation*}
\begin{aligned}
I_2
&= \int_0^t \int U_1^{\frac{2rs}{p}} |F_1-F_2| |F_1-F_2|^{r-1}\,dx\,ds.
\end{aligned}
\end{equation*}
Note that $p\geq2$ and the hypothesis $s<p$ implies that $r:=\frac{p}{s}\in (1,\infty)$. Then, as in the proof of Theorem \ref{teo:uni}, by using Lemma \ref{lem:gani} we now get that 

\begin{equation*}
\begin{aligned}
I_2 &\leq \int_0^t\|U_1\|_{2rq}^{2}\| F_1-F_2\|_{\frac{rq}{q-1}}\|F_1-F_2\|_{r}^{r-1}\,ds\\
   &\lesssim q\int_0^t\|U_1\|^{2}_{H^1}\|F_1-F_2\|_{r}^{1-\frac{1}{q}}
   \|\nabla\,F_1-\nabla\,F_2\|_{2}^{\frac{1}{q}} \|F_1-F_2\|_{r}^{r-1}\,ds,
\end{aligned}
\end{equation*}
where the suppressed constant depends on $r$, $p$ but not $q$. Since $F_1,F_2\in L^{\infty}(0,T;H^{1}(\Tt))$
\begin{equation*}
I_2\lesssim q  \int_0^t\,\|U_1(s)\|_{H^1}^2\|F_1(s)-F_2(s)\|_r^{r-\frac{1}{q}}\,ds.
\end{equation*}
We may treat $I_3$ similarly to get that $y(t)= \|F_1(t)-F_2(t)\|_{r}^r\in C([0,T])$ satisfies
\begin{equation}\label{eq:finaluni}
y(t)\leq C rq\int_0^t\,f(s)y(s)^{1-\frac{1}{rq}}\,ds,\,\,y(0)=0,
\end{equation}
where $f(s) := 1+\|U_1(s)\|_{H^1}^2 + \|U_2(s)\|_{H^1}^2 \in L^1(0,T)$, $C$ is independent of $q$ and \eqref{eq:finaluni} holds for all $q>1$. Then, by using again Lemma \ref{lem:bihari} we deduce that $0 \leq y(t) \leq \bar y(t) = \left(C\int_0^t f(s) \,ds\right)^q$ and the proof can now be concluded as in Theorem \ref{teo:uni}.
\end{proof}

\section{Global regularity}\label{sec6}

Before proving Theorem \ref{teo:reg} we prove a local existence result for \eqref{eq:main}-\eqref{eq:id}. Of course, the local existence of smooth solutions holds under more general hypothesis than the ones considered in the next proposition.
\begin{proposition}\label{prop:local}
Under the assumptions of Theorem \ref{teo:reg}, there exists a time 
$T^*=T^*(\|v_0\|_{H^{3}}, \|F_0\|_{H^{3}} )$ such that the unique weak solution constructed in Theorem \ref{teo:uni} satisfies 
\begin{equation*}
(v,F)\in L^{\infty}(0,T^*;H^3(\Tt))
\end{equation*}
\end{proposition}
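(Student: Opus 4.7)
The strategy is to propagate $H^3$-regularity along the Galerkin iterates $(v^N,F^N)$ from Proposition \ref{prop:exgal}, obtain uniform-in-$N$ bounds on a short time interval $[0,T^*]$, pass to the limit, and use Theorem \ref{teo:uni} to identify the smooth limit with the unique weak solution. Since the Fourier projection is bounded in every $H^k$, the initial data $(v_0^N,F_0^N)=(P^Nv_0,P^NF_0)$ satisfy $\|v_0^N\|_{H^3}\le\|v_0\|_{H^3}$ and $\|F_0^N\|_{H^3}\le\|F_0\|_{H^3}$, so the Galerkin scheme is well suited to carry higher-order estimates.

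For each multi-index $\alpha$ with $|\alpha|\le 3$, I would differentiate \eqref{eq:app} by $\partial^\alpha$, test the velocity equation with $\partial^\alpha v^N$ and the kinematic equation with $\partial^\alpha F^N$, and sum. After integration by parts and Young's inequality to absorb the mixed terms $\int\nabla\partial^\alpha v^N\cdot\partial^\alpha F^N$ and $\int\partial^\alpha S(F^N):\nabla\partial^\alpha v^N$ into the dissipation $\|\nabla\partial^\alpha v^N\|_2^2$, this yields
\begin{equation*}
\frac{d}{dt}\big(\|v^N\|_{H^3}^2+\|F^N\|_{H^3}^2\big)+\|\nabla v^N\|_{H^3}^2\lesssim\sum_{|\alpha|\le 3}\|\partial^\alpha S(F^N)\|_2^2+\|F^N\|_{H^3}^2.
\end{equation*}
The crucial step is a Moser-type estimate
\begin{equation*}
\|\partial^\alpha S(F^N)\|_2\le\Psi\big(\|F^N\|_{H^3}\big)\,\|F^N\|_{H^3},\qquad|\alpha|\le 3,
\end{equation*}
with $\Psi$ continuous and increasing. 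This is proved by expanding $\partial^\alpha S(F^N)$ via the Faà di Bruno formula as a sum of terms of the form $D^kW(F^N)\cdot(\partial^{\beta_1}F^N,\dots,\partial^{\beta_k}F^N)$ with $k\le 4$ and $|\beta_1|+\cdots+|\beta_k|=|\alpha|$; the hypothesis $W\in C^4$ bounds $\|D^kW(F^N)\|_\infty$ by a continuous function of $\|F^N\|_\infty$, while the two-dimensional Sobolev embeddings $H^3\hookrightarrow L^\infty$, $H^2\hookrightarrow W^{1,4}$ and $H^1\hookrightarrow L^q$ for every $q<\infty$ place each derivative factor of $F^N$ in an appropriate Lebesgue space.

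Setting $E_3^N(t):=\|v^N(t)\|_{H^3}^2+\|F^N(t)\|_{H^3}^2$, the previous estimates combine into a differential inequality $\tfrac{d}{dt}E_3^N\le\Phi(E_3^N)$ with $\Phi$ continuous. Comparison with the scalar ODE $z'=\Phi(z)$, $z(0)=\|v_0\|_{H^3}^2+\|F_0\|_{H^3}^2$, produces $T^*>0$ depending only on $\|v_0\|_{H^3}$ and $\|F_0\|_{H^3}$, together with a uniform bound $E_3^N\le M$ on $[0,T^*]$. Extracting a weakly-$*$ convergent subsequence in $L^\infty(0,T^*;H^3(\Tt))$ and invoking the compactness argument already used in the proof of Theorem \ref{teo:ex} to identify the nonlinear stress term in the limit, I obtain a weak solution lying in $L^\infty(0,T^*;H^3(\Tt))\times L^\infty(0,T^*;H^3(\Tt))$; by the uniqueness part of Theorem \ref{teo:uni}, this limit coincides with the weak solution constructed there on $[0,T^*]$, which completes the proof. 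The main obstacle is the bookkeeping required for the Moser-type estimate, but in two dimensions the available Sobolev embeddings render the argument routine.
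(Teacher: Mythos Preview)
Your proposal is correct and follows essentially the same approach as the paper: both arguments derive an $H^3$ energy inequality for the Galerkin iterates by testing the differentiated system, control $\|\nabla^3 S(F^N)\|_2$ via the chain rule and two-dimensional Sobolev embeddings, close a superlinear ODE for $\|v^N\|_{H^3}^2+\|F^N\|_{H^3}^2$, and then pass to the limit invoking the uniqueness of Theorem~\ref{teo:uni}. The only cosmetic difference is that the paper uses the explicit polynomial growth of the derivatives of $W$ to obtain the concrete bound $\|\nabla^3 S(F^N)\|_2^2\lesssim\|\nabla^3 F^N\|_2^{2+2m}$, whereas you package the same information into an abstract Moser-type estimate with a continuous $\Psi$; either formulation yields the same short-time existence via ODE comparison.
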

\begin{proof}
We henceforth use the following notation: given tensors $\{A^i\}_{i=1}^n$, we write
\[
(A^1)(A^2)....(A^n)
\]
for linear combinations of products of the entries of the tensor inside the brackets. Recall the Galerkin approximation
\begin{equation}\label{eq:app1}
\begin{aligned}
\partial_t\,v^N-\dive\,P^N(S(F^N))-\Delta\,v^N&=0\\
\partial_t\,F^N-\nabla\,v^N&=0\\
\curl\,F^N&=0,
\end{aligned}
\end{equation}
and note that, in the periodic setting, the operators $P^N$ and $\nabla$ commute. Therefore, 
\begin{equation*}
\nabla^2\dive\,P^N(S(F^N))=P^N \nabla^2\dive\, S(F^N)
\end{equation*}
and 
\begin{equation*}
\|P^N \nabla^2\dive\, S(F^N)\|_2\leq C\|\nabla^3\,S(F^N)\|_2
\end{equation*}
with $C$ independent of $N$.
Next, note that 
\begin{equation*}
\begin{aligned}	
\nabla^3(S(F^N))&=(D^4W(F^N))(\nabla\, F^N)(\nabla\, F^N)(\nabla\, F^N)+(D^3W(F^N))(\nabla^2\, F^N)(\nabla\, F^N)\\
&+(D^2W(F^N))(\nabla^3\, F^N)
\end{aligned}
\end{equation*}
and then since all the derivative of $W$ have polynomial growth, by Sobolev embeddings it follows that there exists $m>0$ such that 
\begin{equation*}
\|\nabla^3\,S(F^N)\|_2^2\leq\,C \|\nabla^3\,F^N\|_2^{2+2m}
\end{equation*}
again with $C$ independent of $N$. By differentiating two times the first equation of \eqref{eq:app1} and multiplying the resulting equation by $-\Delta\nabla^2\,v^N$, after integrating in space, we obtain
\begin{equation}\label{eq:p5}
\begin{aligned}
\frac{d}{dt}\|\nabla^3\,v^N\|_2^2+\|\nabla^4\,v^N\|_2^2& \lesssim \|\nabla^2\dive(S(F^N))\|_2^2\\
&\lesssim \|\nabla^3\,F^N\|_2^{2+2m}.
\end{aligned}	
\end{equation}
On the other hand by differentiating three times the second equation and multiplying the resulting equation by $\nabla^3\,F^N$, integrating in space and using Young's inequality we get
\begin{equation}\label{eq:p6}
\frac{d}{dt}\|\nabla^3\,F^N\|_2^2 \lesssim \|\nabla^3\,F^N\|_2^2 + \frac{1}{4}\|\nabla^4\,v^N\|_2^2,
\end{equation}
where we note that no constant has been suppressed for the term $\frac{1}{4}\|\nabla^4\,v^N\|_2^2$. Therefore, by \eqref{eq:p5} and \eqref{eq:p6}, $y(t):=(\|\nabla^3\,F^N(t)\|_2^2+ \|\nabla^3\,v^N(t)\|_2^2)$ satisfies 
\begin{equation*}
\dot{y}(t)\lesssim y(t)+y^{1+m}(t)
\end{equation*}
which implies that there exists $T^*$ independent of $N$ such that 
\begin{equation*}
(v^N,F^N)\in L^{\infty}(0,T^*;H^{3}(\Tt))\times L^{\infty}(0,T^*;H^{3}(\Tt)).
\end{equation*}	
The proposition now follows by taking the limit as $N\to\infty$ to obtain 
\[
(v,F) \in L^{\infty}(0,T^*;H^{3}(\Tt))\times L^{\infty}(0,T^*;H^{3}(\Tt))
\]
which is the solution constructed in Theorem \ref{teo:ex} whose uniqueness follows from Theorem \ref{teo:uni}.
\end{proof}
We are now in a position to prove Theorem \ref{teo:reg}.
\begin{proof}[Proof of Theorem \ref{teo:reg}]
We only prove the \emph{a priori} estimates. Indeed, by using Proposition \ref{prop:local} the result follows by a simple continuity argument. Note that we have already proved 
\begin{equation*}
\begin{aligned}
&v\in L^{\infty}(0,T;L^{2}(\Tt))\cap L^{2}(0,T;H^{1}(\Tt))\\
& F\in L^{\infty}(0,T;H^{1}(\Tt)).
\end{aligned}
\end{equation*}
Next, we prove an $H^2$-estimate. By multiplying the first equation of \eqref{eq:main} by $-\Delta\,v$ we get 
\begin{equation*}
\frac{d}{dt}\|\nabla\,v\|_2^2+\|\nabla^2\,v\|_2^2\lesssim \int|D^2\tilde{W}(F)||\nabla\,F||\nabla^2\,v|\,dx+ K\int |\nabla\,F||\nabla^2\,v|\,dx
\end{equation*}
and integrating in time on $(0,t)$, suppressing $K$, we get that
\begin{equation}\label{eq:711.1}
\begin{aligned}
\int|\nabla\,v|^2\,dx+\int_0^t\int|\nabla^2\,v|^2\,dx\,ds\lesssim \int|\nabla\,v_0|^2\,dx&+ \int_0^t\int |D^2\tilde{W}(F)| |\nabla\,F||\nabla^2\,v|\,dx\,ds\\&+\int_0^t\int|\nabla\,F||\nabla^2\,v|\,dx\,ds.
\end{aligned}
\end{equation}
Next we multiply the first equation of\eqref{eq:main} by $\Delta\dive\,F$ and we integrate in space to get that
\begin{equation*}
\int v_t \Delta\dive\,F\,dx-\int\dive (S(F)) \Delta\dive\,F\,dx-\int\Delta\,v \Delta\dive\,F\,dx=0
\end{equation*}
and we treat every term separately. We start with the third term. By using the second and the third equation in \eqref{eq:main} we have
\begin{equation}\label{eq:73}
\begin{aligned}
-\int\Delta\,v \Delta\dive\,F\,dx&=-\int\dive\,F_t \Delta\dive\,F\,dx = \frac{d}{dt}\int\frac12|\nabla^2\,F|^2\,dx.
\end{aligned}
\end{equation}
Concerning the second term and using that fact that $\curl\,F = 0$ we find that
\begin{align}
-\int\partial_{\alpha} S_{i\alpha}(F) \partial_{\gamma\gamma} \partial_{\beta} F_{i\beta}
\,dx&= -\int\partial_{\beta} S_{i\alpha}(F) \partial_{\gamma\gamma} \partial_{\alpha} F_{i\beta}\,dx\nonumber\\
= -\int\frac{\partial^2\,W(F)}{\partial\,F_{i\alpha}\partial\,F_{j\delta}}
\partial_{\beta}F_{j\delta}\partial_{\gamma\gamma}\partial_{\beta}F_{i\alpha}\,dx
&=\int\frac{\partial^2\,W(F)}{\partial\,F_{i\alpha}\partial\,F_{j\delta}}
\partial_{\beta\gamma}F_{j\delta}\partial_{\gamma\beta}F_{i\alpha}\,dx\nonumber\\
+\int(D^3\,W(F))(\nabla\,F)(\nabla\,F)(\nabla^2\,F)\,dx&=\int\frac{\partial^2\,\tilde{W}(F)}{\partial\,F_{i\alpha}\partial\,F_{j\delta}}
\partial_{\beta\gamma}F_{j\delta}\partial_{\gamma\beta}F_{i\alpha}\,dx\nonumber\\
-K\int\,|\nabla^2\,F|^2\,dx&+\int(D^3\,W(F))(\nabla\,F)(\nabla\,F)(\nabla^2\,F)\,dx.\label{eq:74}
\end{align}	
Finally, concerning the first term we have 
\begin{equation}\label{eq:75}
\begin{aligned}
\int\,v_t\Delta\dive\,F\,dx&=\frac{d}{dt}\int v \Delta\dive\,F\,dx-\int|\nabla^2v|^2\,dx
\end{aligned}
\end{equation}
and combining \eqref{eq:73}-\eqref{eq:75} we infer that 
\begin{equation}\label{eq:76}
\begin{aligned}
&\frac{d}{dt}\int\left(|\nabla^2\,F|^2+ v \Delta\dive\,F\right)\,dx+\int\frac{\partial^2\,\tilde{W}(F)}{\partial\,F_{i\alpha}\partial\,F_{j\delta}}\partial_{\beta\gamma}F_{j\delta}\partial_{\gamma\beta}F_{i\alpha}\,dx\\
&\lesssim \int\,|\nabla^2\,F|^2\,dx
+\int|\nabla^2v|^2\,dx+\int(D^3\,W(F))(\nabla\,F)(\nabla\,F)(\nabla^2\,F)\,dx.
\end{aligned}
\end{equation}
Integrating \eqref{eq:76} in time we get 
\begin{align}
&\int|\nabla^2\,F|^2\,dx+\int_0^t\int\frac{\partial^2\,\tilde{W}(F)}{\partial\,F_{i\alpha}\partial\,F_{j\delta}}\partial_{\beta\gamma}F_{j\delta}\partial_{\gamma\beta}F_{i\alpha}\,dx\,ds\nonumber\\
&\lesssim \int_0^t\int\,|\nabla^2\,F|^2\,dx\,ds
+\int_0^t\int|\nabla^2v|^2\,dx\,ds+\int_0^t\int|D^3\,W(F)||\nabla\,F|^2|\nabla^2\,F|\,dx\,ds\nonumber\\
&\quad +\int\left(|\nabla^2\,F_0|^2+ |\nabla v_0|^2 +|\Delta F_0|^2\right)\,dx+\left|\int \nabla v \Delta\,F\,dx\right|\label{eq:761.1}\\
&\lesssim \int_0^t\int\,|\nabla^2\,F|^2\,dx\,ds
+\int_0^t\int|\nabla^2v|^2\,dx\,ds+\int_0^t\int|D^3\,W(F)||\nabla\,F|^2|\nabla^2\,F|\,dx\,ds\nonumber\\
&\quad +\int\left(|\nabla^2\,F_0|^2+ |\nabla v_0|^2 +|\Delta F_0|^2\right)\,dx+\int |\nabla v|^2\,dx+\frac{1}{2} \int|\nabla^2\,F|^2\,dx,\nonumber
\end{align}
where, by Young's inequality, there is no suppressed constant in front of the term $\frac{1}{2} \int|\nabla^2\,F|^2\,dx$. Also, note that by the convexity of $\tilde{W}$, (A4), the second term on the left-hand side is nonnegative and thus \eqref{eq:761.1} now reads as
\begin{equation}\label{eq:761}
\begin{aligned}
\int|\nabla^2\,F|^2 \,dx& \lesssim \int_0^t\int\,|\nabla^2\,F|^2\,dx\,ds
+\int_0^t\int|\nabla^2v|^2\,dx\,ds\\
&+\int_0^t\int|D^3\,W(F)||\nabla\,F|^2|\nabla^2\,F|\,dx\,ds \\
&\quad +\int\left(|\nabla^2\,F_0|^2+ |\nabla v_0|^2 \right)\,dx+\int |\nabla v|^2\,dx.
\end{aligned}
\end{equation}
We may now multiply \eqref{eq:761} by a constant small enough so that, after adding up with \eqref{eq:711.1},
\begin{equation}\label{eq:77}
\begin{aligned}
\int\left(|\nabla^2\,F|^2+|\nabla\,v|^2\right)\,dx+\int_0^t\int|\nabla^2\,v|^2\,dx\,ds&\lesssim
\int_0^t\int|\nabla^2\,F|^2\,dx\,ds\\&+\int_0^t\int|D^3W(F)||\nabla\,F|^2|\nabla^2\,F|\,dx\,ds\\
&+\int_0^t\int|D^2\tilde{W}(F)||\nabla\,F||\nabla^2\,v|\,dx\,ds\\& + \int_0^t\int |\nabla\,F||\nabla^2\,v|\,dx\,ds \\
&+\int\left(|\nabla^2\,F_0|^2+|\nabla\,v_0|^2\right)\,dx.
\end{aligned}
\end{equation}
We use now the growth conditions \eqref{eq:addhyp}. We only treat the case $p>3$, which is the case when $D^3\tilde{W}$ is not bounded. Thus, in the case we assume $(A4)$ we restrict out attention to the range $3< p\leq 5$ and from \eqref{eq:77} we get
\begin{equation}\label{eq:78}
\begin{aligned}
\int\left(|\nabla^2\,F|^2+|\nabla\,v|^2\right)\,dx+\int_0^t\int|\nabla^2\,v|^2\,dx\,ds&\lesssim
\int_0^t\int|\nabla^2\,F|^2\,dx\,ds\\
&+\int_0^t\int (1+ |F|^{p-3})|\nabla\,F|^2|\nabla^2\,F|\,dx\,ds\\
&+\int_0^t\int (1+ |F|^{p-2})|\nabla\,F||\nabla^2\,v|\,dx\,ds\\
&+\int\left(|\nabla^2\,F_0|^2+|\nabla\,v_0|^2\right)\,dx\\
&=I_1+I_2+I_3+I_4.
\end{aligned}
\end{equation}
We only have to bound $I_2$ and $I_3$. By using Young's inequality, Lemma \ref{lem:gani} (1) and the bound $F\in L^{\infty}(0,T;H^{1}(\Tt))$ we have 
\begin{equation*}
\begin{aligned}
I_3&\lesssim \int_0^t \left(1 + \|F\|_{4(p-2)}^{p-2}\right)\|\nabla\,F\|_4\|\nabla^2\,v\|_2\,ds\\
&\lesssim \int_0^t \left(1 + \|F\|_{4(p-2)}^{2(p-2)}\right)\|\nabla\,F\|_2\|\nabla^2\,F\|_2\,ds
+\frac{1}{4}\int_0^t \|\nabla^2\,v\|_2^2\,ds\\
&\lesssim \int_0^t\|\nabla^2\,F\|_2 \,ds+\frac{1}{4}\int_0^t \|\nabla^2\,v\|_2^2\,ds,
\end{aligned}
\end{equation*}
where, as in other instances, due to Young's inequality, there is no suppressed constant in front of $\frac{1}{4}\int_0^t \|\nabla^2\,v\|_2^2\,ds$ and can get absorbed into the left-hand side of \eqref{eq:78}. 

Regarding $I_2$, we proceed similarly and using Lemma \ref{lem:gani} (1), and the Brezis-Gallouet inequality (3) together with the bound $F\in L^{\infty}(0,T;H^{1}(\Tt))$, we find that 
\begin{equation*}
\begin{aligned}
I_2&\lesssim \int_0^t \left(1 + \|F\|_{\infty}^{p-3}\right)\|\nabla\,F\|_4^2\|\nabla^2\,F\|_2\,ds\\
&\lesssim \int_0^t \left(1 + (\log(e+\|\nabla^2\,F\|_2^2))^{\frac{p-3}{2}}\right)\|\nabla^2\,F\|_2^2\,ds\\
&\lesssim \int_0^t(\log(e+\|\nabla^2\,F\|_2^2))^{\frac{p-3}{2}}\|\nabla^2\,F\|_2^2\,ds\\
&\lesssim \int_0^t(\log(e+\|\nabla^2\,F\|^2_2))\|\nabla^2\,F\|_2^2\,ds
\end{aligned}
\end{equation*}
where we used that $3<p\leq5$, i.e. that $(p-3)/2 \in (0,1]$. We note that for $2\leq p\leq 3$ there is no need to use the Brezis-Gallouet inequality since $|D^3W|\leq C$. Therefore $y(t):=(\|\nabla^2\,F(t)\|_2^2+\|\nabla\,v(t)\|_2^2)$ satisfies the following differential inequality
\begin{equation*}
y(t)\lesssim y(0)+\int_0^t(y(s)+ y^{\frac{1}{2}}(s)+y(s)\log(e+y(s)))\,ds
\end{equation*}
which implies that 
\begin{equation}\label{eq:711}
\begin{aligned}
& v\in L^{\infty}(0,T;H^1(\Tt)\cap L^{2}(0,T;H^{2}(\Tt)),\\
& F\in L^{\infty}(0,T;H^{2}(\Tt)).
\end{aligned}
\end{equation}
In particular, from \eqref{eq:711}, it follows that 
\begin{equation}\label{eq:712}
\begin{aligned}
& F\in L^{\infty}((0,T)\times\Tt),\\
& \nabla\,F\in L^{\infty}(0,T;L^q(\Tt))\mbox{ for any }q<\infty.
\end{aligned}
\end{equation}
It remains to prove the $H^3$-estimate. As in the proof of Proposition \ref{prop:local} note that 
\begin{equation*}
\begin{aligned}	
\nabla^3(S(F))&=(D^4W(F))(\nabla\,F)(\nabla\,F)(\nabla\,F)+(D^3W(F))(\nabla^2\,F)(\nabla\,F)+(D^2W(F))(\nabla^3\,F).
\end{aligned}
\end{equation*}
Therefore, using that $W$ and its derivative have polynomial growth, and Lemma \ref{lem:gani} (1), we get from \eqref{eq:711} and \eqref{eq:712} that
\begin{equation*}
\|\nabla^3\,S(F)\|_2^2\lesssim 1+\|\nabla^3\,F\|_2+ \|\nabla^3\,F\|_2^2.
\end{equation*}
Arguing as in Proposition \ref{prop:local} we get the following two differential inequalities in analogy to \eqref{eq:p5}, \eqref{eq:p6}:
\begin{equation}\label{eq:714}
\begin{aligned}
\frac{d}{dt}\|\nabla^3\,v\|_2^2+\|\nabla^4\,v\|_2^2&\lesssim \|\nabla^2\dive(S(F))\|_2^2\\
&\lesssim 1+\|\nabla^3\,F\|_2+ \|\nabla^3\,F\|_2^2
\end{aligned}	
\end{equation}
and
\begin{equation}\label{eq:715}
\frac{d}{dt}\|\nabla^3\,F\|_2^2\lesssim \|\nabla^3\,F\|_2^2+\frac{1}{4}\|\nabla^4\,v\|_2^2,
\end{equation}
where there are no suppressed constants in the term $\frac{1}{4}\|\nabla^4\,v\|_2^2$. Therefore from \eqref{eq:714} and \eqref{eq:715}, $y(t):=(\|\nabla^3\,F(t)\|_2^2+ \|\nabla^3\,v(t)\|_2^2)$ satisfies 
\begin{equation*}
\dot{y}(t)\leq 1+y^{\frac{1}{2}}(t)+y(t)
\end{equation*}
which implies that
\begin{equation*}
(v,F)\in L^{\infty}(0,T;H^{3}(\Tt))\times L^{\infty}(0,T;H^{3}(\Tt)).
\end{equation*}	
Then the case of $W$ satisfying (A4) and $2\leq p\leq 5$ follows by a simple continuation argument.\par
Regarding the case of $W$ satisfying (A4$^\prime$) and $2\leq p\leq 6$ the proof is exactly the same except for the way we treat the term $I_2$ in the inequality \eqref{eq:78}. We first notice that by exploiting the assumption  (A4$^\prime$) in the inequality \eqref{eq:761.1} we have that \eqref{eq:78} reads as follows 
\begin{equation*}
\begin{aligned}
&\int\left(|\nabla^2\,F|^2+|\nabla\,v|^2\right)\,dx+\int_0^t\int|\nabla^2\,v|^2+\int_0^t\int|F|^{p-2}|\nabla^2F|^2\,dx\,ds\lesssim
\int_0^t\int|\nabla^2\,F|^2\,dx\,ds\\
&+\int_0^t\int (1+ |F|^{p-3})|\nabla\,F|^2|\nabla^2\,F|\,dx\,ds+\int_0^t\int (1+ |F|^{p-2})|\nabla\,F||\nabla^2\,v|\,dx\,ds\\
&+\int\left(|\nabla^2\,F_0|^2+|\nabla\,v_0|^2\right)\,dx=I_1+I_2+I_3+I_4.
\end{aligned}
\end{equation*}
The terms $I_1$, $I_3$ and $I_4$ can be treated in the same as before, while $I_2$ is treated as follows. First, since (A4$^\prime$) implies (A4) we can consider only the range $5<p\leq 6$. Then, by using Young's inequality, Lemma \ref{lem:gani} and the bound $\nabla F\in L^{\infty}(0,T;H^{1}(\T))$ we get that 
\begin{equation*}
\begin{aligned}
I_2&\lesssim \int_0^t\int |\nabla\,F|^2|\nabla^2\,F|\,dx\,ds+\int_0^t\int |F|^{p-4}|\nabla\,F|^4\,dx\,ds+\frac{1}{2}\int_0^t\int |F|^{p-2}|\nabla^2\,F|^2\,dx\,ds\\
&\lesssim \int_0^t\|\nabla^2F(s)\|_{2}^2\,ds+\int_0^t\left(\ln(e+\|\nabla^2F(s)\|_{2})\right)^{\frac{p-4}{2}}\|\nabla^2F(s)\|^2_{2}\,ds\\
&+\frac{1}{2}\int_0^t\int |F|^{p-2}|\nabla^2\,F|^2\,ds.
\end{aligned}
\end{equation*}
Then, by noticing that $5<p\leq 6$ implies that $\frac{p-4}{2}\in(0,1]$ we can conclude as the previous case. 
\end{proof}
%
%
%
%
%
%
%
%

\section{Sustained Oscillations}\label{sec:sust}

We next record some examples of solutions of the viscoelasticity system that exhibit sustained oscillations.

\subsection{A nonlinear problem with non-monotone stress-strain relation}
Consider the equations of viscoelasticity in one dimension, for the motion $y(t,x) :(0,T) \times [0,1] \to \R$,
\begin{equation}
\label{nonlinvis}
y_{tt} = \sigma (y_x)_x + y_{t x x}
\end{equation}
which upon setting $u = y_x$, $v= y_t$  is expessed as a system
\begin{equation}
\label{onedvisco}
\begin{aligned}
u_t &= v_x
\\
v_t &= \sigma(u)_x + v_{xx}.
\end{aligned}
\end{equation}
We denote the stress with $\sigma$ instead of $S$ to comply with the standard notation used in the one-dimensional case. The function $\sigma (u)$ is smooth and for the purposes of this section it is typically required
to be non-monotone.

Assume there exist two positive states $0 < a < b$ such that the values of the stress function $\sigma (u)$
satisfy 
\begin{equation}
\label{condnonm}
a + \sigma (t a) = b + \sigma (t b) \qquad \mbox{ $ \forall t \in [1,2]$} \, .
\end{equation}
Here,  the states $a,b$ are fixed and $t$ is thought of as a parameter. 

The condition \eqref{condnonm} restricts considerably the form of $\sigma(u)$, and we give an example
to show that it can be satisfied. Let $a, b >0$ fixed so that $0 < a < 2a < b < 2b$
and suppose the graph of $\sigma (u)$ is given and is strictly increasing for $u \in (b, 2b)$.
Then $(a, 2a) \cap (b, 2b) = \emptyset$ and  \eqref{condnonm} fully determines the graph of $\sigma (u)$ for $u \in (a, 2a)$
from the graph in $(b, 2b)$.
The emerging graph is increasing in $(a,2a)$ but the full graph will be non-monotone, see Figure  Fig. \ref{figgraph} where a
specific example is depicted.

\begin{figure}[htbp]
\begin{center}
\includegraphics[scale=0.3]{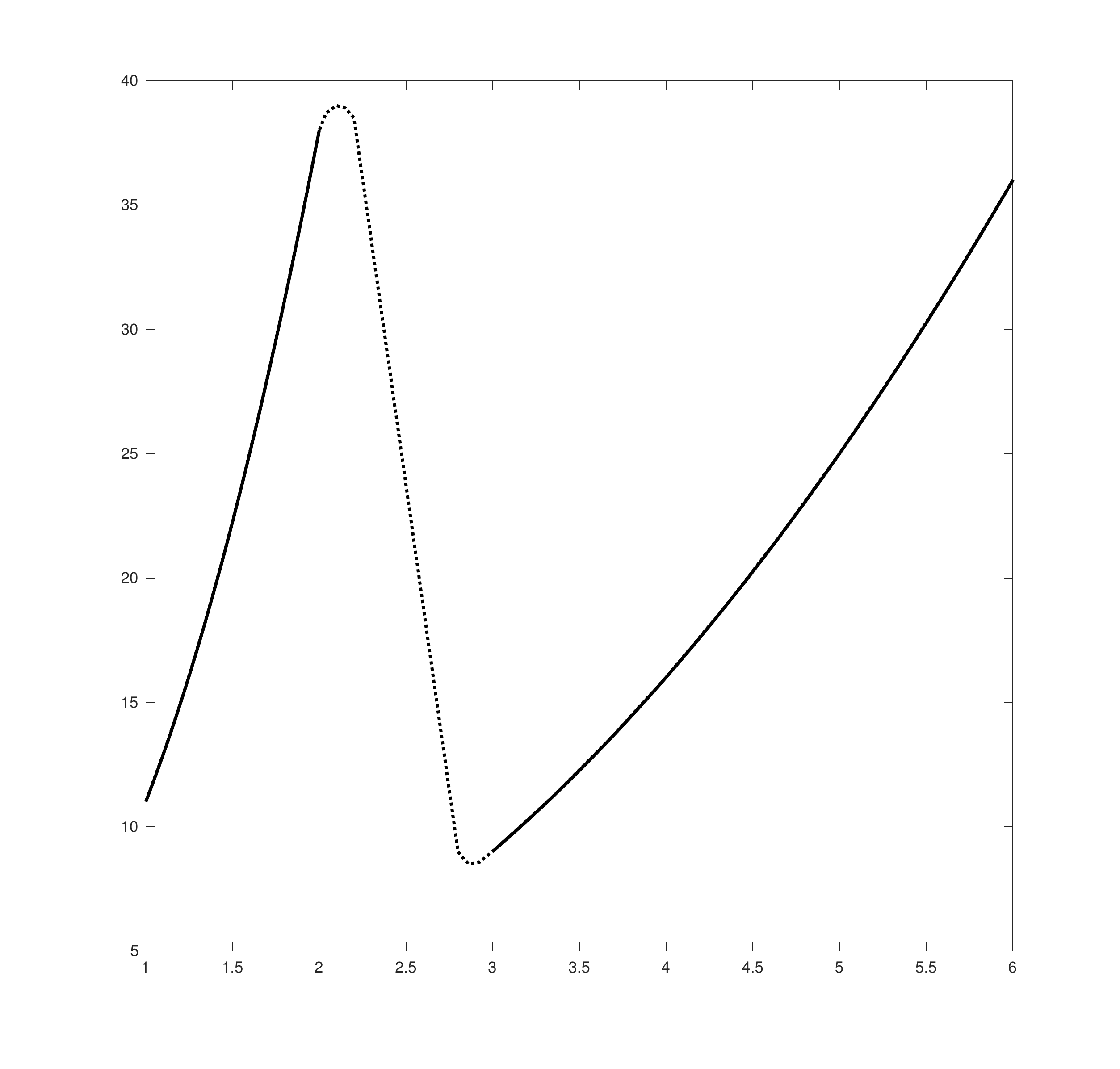}
\caption{The block parts of the graph satisfy \eqref{condnonm} for $1\le t\le2$; the dotted points interpolate between these parts.}
\label{figgraph}
\end{center}
\end{figure}



The example that is constructed below is based on two properties of \eqref{onedvisco}:
\begin{itemize}
\item the class  $u_s (t) = \kappa t$, $v_s (x) = \kappa x$ with $\kappa \in \R$
form a special class of universal -- for any $\sigma (u)$ -- uniform shear solutions.
\item as pointed out by {\sc D. Hoff} \cite{Hoff87} system \eqref{onedvisco} admits solutions 
continuous in $v$ but discontinuous in $u$ and $v_x$. Such solutions satisfy the Rankine-Hugoniot conditions
$$
\begin{aligned}
-s [u] &= [v]
\\
- s [v] &= [ \sigma (u) + v_x ]
\end{aligned}
$$
where $s$ is the shock speed and $[q] = q_+ - q_-$ the jump of the quantity $q$. As $v$ is continuous, 
the shocks are stationary $s=0$ and $[u] \ne 0$ has to satisfy
\begin{equation}\label{sec7RH}
s= 0 \, , \quad [ \sigma (u) + u_t ] = 0 \, .
\end{equation}
\end{itemize}

Next, we construct a family of solutions defined on $[1,2]\times \R$. To this end, fix states $a, b$ satisfying 
$0 < a < 2a < b < 2b$, 
suppose that \eqref{condnonm} is satisfied, and denote by $S(t)$ the common value
\begin{equation*}
S(t) := a + \sigma (t a) = b + \sigma (t b)  \qquad 1 \le t \le 2 \, .
\end{equation*}
For $0 < \theta < 1$, define the periodic function
$$
F(x) := \begin{cases}
 a,  &  x \in (k, k+\theta)
 \\
 b,  &  x \in (k + \theta , k + 1)
 \end{cases}
\quad \; k \in \mathbb{Z}
$$
and, based on $F$,  define
\begin{equation}
\label{soln1}
U(t,x) = t F(x) \, , \quad V_x (t,x) = F(x) \, , \quad t \in (1, 2)
\end{equation}
and  $V(t,x) \, , Y(t,x)$ are defined by setting
\begin{equation}
\label{soln2}
\begin{aligned}
V(t,x) &= \bar V (x)  := \int_0^x  F( y) dy 
=
\begin{cases}
k \bar V(1) + a  (x - k) ,  & x \in (k, k+\theta)
\\
k \bar V(1)  + a  \theta + b  (x - k - \theta),   & x \in (k+ \theta , k+1)
\end{cases}
\\
Y(t,x) &= \int_0^x U (t, y) dy = t \bar V (x) 
\end{aligned}
\end{equation}
where $ k \in \mathbb{Z}$, $t \in (1, 2)$ and $\bar V(1) = (a\theta + b(1-\theta))$.
The function $(U(t,x), V(t,x))$ is a weak solution of \eqref{onedvisco} on $ [1,2]\times\R$
satisfying the equations in a classical sense on the domains $(1,2)\times (k, k + \theta)$ and
$(1,2)\times(k + \theta , k+1)$ and due to \eqref{condnonm} it is a weak solution with discontinuities
at the interfaces $x = k$ and $x = k + \theta$ satisfying the Rankine-Hugoniot conditions \eqref{sec7RH}.

The functions $Y(t,x)$, $V(t,x)$, $U(t,x)$ are then rescaled and restricted to the interval $(1,2)\times (0,1)$
as follows
\begin{equation}
\label{exsoln3}
\begin{aligned}
y_n (t,x) &= \frac{1}{n} Y(t, nx) \, , \qquad  &&v_n (t,x) = \frac{1}{n} (\partial_t Y) (t, nx) = \frac{1}{n} V(t, nx)
\\
u_n (t,x)  &= U(t, nx) \, , \qquad  &&{v_n}_x (t,x) = V_x (t, nx).
\end{aligned}
\end{equation}
The equation \eqref{nonlinvis} is not invariant under the scaling $y_n (t,x) = \frac{1}{n} Y(t, nx)$.
However, since \eqref{soln1}, \eqref{soln2} is stationary, we easily check that \eqref{exsoln3}
are a class of exact weak solutions of \eqref{onedvisco} defined for $x \in (0,1)$, $t \in [1,2]$. 
Moreover, one easily computes the limits
$$
\begin{aligned}
v_n (t,x) &\to (a\theta + b(1-\theta)) x  \quad \mbox{ strongly in $L^2 \big ( (1,2)\times (0,1) \big )$ }
\\
u_n (t,x) &\rightharpoonup  (a\theta + b(1-\theta) ) t  \quad \mbox{ weakly-$\star$ in  $L^\infty \big ( (1,2)\times (0,1) \big )$ }
\\
&\sigma(u_n) + \partial_x v_n  = S(t)
\end{aligned}
$$
and 
$$
\sigma (u_n) \rightharpoonup  \theta \sigma ( at) + (1-\theta) \sigma(bt) \ne \sigma \big (  \theta a t + (1-\theta) b t) .
$$
The reader should note that the oscillations are induced by the oscillations in the initial data of $u_n (1,x)$.
\subsection{The linear problem}
Consider next the linear one-dimensional system
\begin{equation}
\label{linearvisc}
y_{tt} = \kappa y_{xx} + y_{t x x}  \qquad x \in (-\pi, \pi) \, , t > 0 
\end{equation}
where the motion $y(t,x)$ is now defined on the torus, $y : (0,T)\times \Tone \to \R$.
We investigate a class of oscillatory solutions of the form
\begin{equation*}
y_n = e^{ i n x} e^{ \lambda_n t} \, ;
\end{equation*}
these functions are periodic and will satisfy \eqref{linearvisc} provided $\lambda_n$ is a root of
\[
\lambda^2 + \lambda n^2 + \kappa n^2 = 0.
\]
The two roots are
\[
\lambda_{\pm} = n^2 \left ( -\tfrac{1}{2} \pm \tfrac{1}{2} \sqrt{1 - \tfrac{4 \kappa}{n^2} } \right ).
\]
Both roots are real and negative (for large $n$) and the smallest  in absolute value corresponds to the $+$ sign and
has the asymptotic behaviour
\[
\begin{aligned}
\lambda_{n +} &= n^2 \left ( -  \kappa \frac{1}{n^2} -  \kappa^2 \frac{2}{n^4} + \dots \right )
\\
&= - \kappa -  \kappa^2 \frac{2}{n^2} + \dots
\end{aligned}
\]
Consider now the rescaled solution $y_n = \tfrac{1}{n}  e^{ i n x} e^{- \lambda_n t}$ and observe that 
the associated $(u_n, v_n)$ have the behavior
$$
u_n (t,x) = i e^{ i n x} e^{- \lambda_{n+} t}  = i e^{ i n x - \kappa t} g_n (t) 
$$
where $g_n (t) \to 1$ and thus $u_n$ has persistent oscillations as $n \to \infty$.  Again such oscillations are induced from oscillatory initial data.
By contrast,
$$
v_n = - \frac{\lambda_{n+}}{n}  e^{ i n x} e^{- \lambda_{n+} t} \to 0 \qquad \mbox{as $n \to \infty$}.
$$
\appendix
	\section{Transfer of dissipation} \label{AppA}   
Consider the system \eqref{eq:main} with small viscosity $\eps > 0$, written
in coordinate form, 
\begin{equation}\label{eq:viscoapp}
\begin{aligned}
\partial_t\,v_i -\del_\alpha (S_{i \alpha} (F))- \eps \Delta\,v_i &=0\\
\partial_t\,F_{i \alpha}-\del_\alpha \,v_i &=0\\
\del_\alpha F_{i \beta} - \del_\beta F_{i \alpha} &= 0 \, ,
\end{aligned}
\end{equation}	
with periodic boundary conditions and initial data \eqref{eq:id}. We assume for the appendix that $W(F)$ is convex
and as usual $S_{i \alpha} = \frac{\del W}{\del F_{i \alpha}}$, and discuss the formal estimates 
available in the zero-viscosity limit.

First, the energy estimate reads
\begin{equation}
\label{idenergyapp}
\frac{d}{dt} \int \tfrac{1}{2} |v|^2 + W(F) \, dx + \int   \eps |\nabla v|^2 \, dx = 0
\end{equation}
and provides $L^p$ control for the family of solutions $(v^\eps, F^\eps)$ and the weak control $\eps  |\nabla v^\eps |^2 \in L^1_{t,x}$ familiar from
the theory of compensated compactness.

Using \eqref{eq:viscoapp} one obtains the identities
\[
\begin{aligned}
\big ( \del_\alpha S_{i \alpha} (F) \big ) (\del_\beta F_{i \beta}) -  (\del_\alpha v_i ) ( \del_\alpha v_i )
&=
(\del_\beta F_{i \beta} ) (\del_t v_i ) - (\del_\alpha v_i ) (\del_t F_{i\alpha}) - \eps (\del_\beta F_{i \beta}) \Delta v_i
\\
&= \del_t \big ( v_i \del_\beta F_{i \beta} \Big ) - \del_\alpha \big ( v_i \del_t F_{i \alpha} \big ) - \eps (\del_\beta F_{i \beta} ) \del_t ( \del_\alpha F_{i \alpha} \big ).
\end{aligned}
\]
Integrating over the torus gives
\begin{equation*}
\frac{d}{dt} \int \eps \frac{1}{2} | \dive F|^2  - v \cdot \dive F dx  + \int \big ( \del_\alpha S_{i \alpha} (F) \big ) (\del_\beta F_{i \beta}) -  (\del_\alpha v_i ) ( \del_\alpha v_i ) dx
= 0 .
\end{equation*}
Next, a use of integration by parts gives
$$
\begin{aligned}
I &= \int \big ( \del_\alpha S_{i \alpha} (F) \big ) (\del_\beta F_{i \beta}) -  |\nabla v|^2 dx
\\
&= \int \frac{ \del^2 W}{\del F_{i \alpha} \del F_{k \gamma}} \del_\beta F_{k \gamma} \del_\alpha F_{i \beta} - |\nabla v|^2 dx
\\
&= \int \frac{ \del^2 W}{\del F_{i \alpha} \del F_{k \gamma}} \del_\beta F_{k \gamma} \del_\beta F_{i \alpha} - |\nabla v|^2 dx
\\
&= \int D^2 W : (\nabla F, \nabla F) - |\nabla v|^2 dx
\end{aligned}
$$
and implies
\begin{equation}
\label{idvisccomp}
\frac{d}{dt} \int \eps^2 \frac{1}{2} | \dive F|^2  - \eps v \cdot \dive F \, dx  + \int  \eps ( D^2 W : (\nabla F, \nabla F) - |\nabla v|^2 ) \, dx = 0.
\end{equation}
Combining \eqref{idenergyapp} with \eqref{idvisccomp} we arrive at
\begin{equation}
\label{unic}
\frac{d}{dt} \int  \tfrac{1}{2} |  v - \tfrac{\eps}{2} \dive F|^2 + \tfrac{\eps^2}{4} |\dive F |^2+ W(F) \, dx 
+ \frac{\eps}{2} \int   D^2 W : (\nabla F, \nabla F) +  |\nabla v|^2  \, dx = 0.
\end{equation}
Identity \eqref{unic} imples that, under conditions of uniform convexity for $W(F)$,
$$
D^2 W \ge c \; \mathbb{I} \, , \quad \mbox{for some $c > 0$},
$$ 
in addition to the uniform bounds $\eps |\nabla v^\eps|^2 \in_b L^1_{t,x}$ one also obtains control $\eps  |\nabla F^\eps|^2  \in_b L^1_{t,x}$. 
The estimate \eqref{unic} extends to several space dimensions an observation of DiPerna \cite{DiPerna83} in connection to the problem of zero-viscosity limits in one space dimension.
A similar property holds for relaxation approximations of the nonlinear elasticity system in one-space dimension \cite{Tzavaras99}.

\begin{remark}
Identity \eqref{unic} in conjunction with the energy \eqref{idenergyapp} provides the main a-priori estimates for system \eqref{eq:main}.
If for example one assumes hypothesis \eqref{eq:ab} and use $\eps =1$,  Lemma \ref{lem:modAB} and the identity
then we conclude
\begin{equation}
\label{unicab}
\frac{d}{dt} \int   |  v - \tfrac{1}{2} \dive F|^2 + \tfrac{1}{2} |\nabla F |^2+  2 W(F) \, dx 
+  \int  \Big (  D^2 \tilde W : (\nabla F, \nabla F) +  |\nabla v|^2 \Big ) dx \le K \int |\nabla F|^2 dx.
\end{equation}
Using Gr\"onwall's lemma, \eqref{unicab} yields control of $\int |\nabla F|^2 dx$.
\end{remark}

\section{Diffusion-dispersion approximations of elasticity} \label{AppB} 
Diffusion-dispersion approximations arise  naturally in studies of phase transitions in elasticity.
One way to introduce such systems is to consider a nonlinear evolution 
$y(t,x)$ generated by an energy functional
$$
\frac{\del^2 y}{\del t^2} = - \frac{\delta \cE}{\delta F}[y] \, ,
$$
where $\cE [y]$ is a functional on the motion $y$. A simple example is provided by strain
gradient theories of the form
$$
\cE [y] = \int W(\nabla y) + \delta A \frac{1}{2} |\Delta y |^2 dx \, .
$$
Here, $W(F)$ is assumed nonconvex to allow models with phase transitions and the term with the
higher order gradient is motivated by the Korteweg theory.
It leads to the second order nonlinear partial differential equation
\begin{equation}
\label{eq:capil}
\frac{\del^2 y}{\del t^2} = \dive \Big ( \frac{\del W}{\del F} (\nabla y) - \delta A  \nabla \Delta y \Big ) \, .
\end{equation}
It is known that for strain energies that are non-convex, viscosity is not sufficient to select the admissible shocks and, 
motivated by the Korteweg theory, Slemrod \cite{Slemrod83} and Truskinovsky \cite{Truskinovskii82} proposed to include the effects of capillarity.

Adding viscosity to \eqref{eq:capil} produces a diffusive-dispersive approximation of the elasticity equations,
in the form of the system
\begin{equation}
\label{difdis}
\begin{aligned}
\partial_t\,v_i - \del_\alpha S_{i \alpha} (F) &= \eps \Delta\,v_i  -
 \delta A \del_\alpha \Delta F_{i \alpha}
\\
\partial_t\,F_{i \alpha}-\del_\alpha \,v_i &=0
\\
\del_\alpha F_{i \beta} - \del_\beta F_{i \alpha} &= 0 \, 
\end{aligned}
\end{equation}	
with $W(F)$ is nonconvex.
Here $\eps > 0$,  $\delta > 0$ are positive parameters while $A$ is a numerical constant.
The involution \eqref{difdis}$_3$ is a constraint induced on solutions by the initial data.
In one space dimension in the limit as $\eps, \delta \to 0$ diffusion dominates dispersion in (at least) the range $\delta = O(\eps^2)$.
A  clever transformation of variables discovered by Slemrod \cite{Slemrod89} indicates that  \eqref{difdis} in 1-$d$ can be transformed to
a viscosity approximation for a system of conservation laws. 

A generalization of this observation to multi-$d$ is provided here.
Let $\kappa$ be a parameter (to be selected) and write \eqref{difdis}$_1$, \eqref{difdis}$_2$, respectively,  as follows:
\[
\begin{aligned}
\del_t ( v_i - \kappa \del_\gamma F_{i \gamma}) - \del_\alpha S_{i \alpha} (F) &= (\eps - \kappa) \Delta ( v_i - \kappa \del_\alpha F_{i \alpha} )
+ [ (\eps - \kappa ) \kappa - \delta A ] \del_\alpha  \Delta F_{i \alpha}
\\
\del_t F_{i \alpha} - \del_\alpha \big ( v_i - \kappa \del_\gamma F_{i \gamma} \big ) &= \kappa \Delta F_{i \alpha}.
\end{aligned}
\]
Observe that if  $\kappa$ is selected by
\begin{equation}
\label{condquad}
\begin{aligned}
\kappa^2 - \eps \kappa + \delta A = 0
\\
0 < \kappa < \eps 
\end{aligned}
\end{equation}
then  \eqref{difdis} reduces to the hyperbolic parabolic system
\begin{equation}
\label{difdisred}
\begin{aligned}
\del_t w - \dive S (F) &= (\eps - \kappa) \Delta w
\\
\del_t F -  \nabla w  &= \kappa \Delta F
\\
\curl F &= 0
\end{aligned}
\end{equation}
which describes the evolution of the function $(w, F)$ with $w = v - \kappa \dive F$. Again $\curl F = 0$ is an involution propagated from the initial data
via the equation \eqref{difdisred}$_2$.

One easily checks the solvability of \eqref{condquad}. The roots of the quadratic are
$$
\kappa_{\pm} = \frac{\eps}{2} \pm \frac{\eps}{2} \sqrt{1 - \frac{4 \delta A}{\eps^2} }.
$$
We deduce that if $\delta = \eps^\rho$, $\rho > 2$,  then for any $A$ we can select $\kappa$ that fulfills the condition $0 < \kappa < \eps$;
in the borderline case $\delta = \eps^2$  the parameter $A$ is restricted to be $0 < A \le \frac{1}{4}$. 
An interesting special case occurs for $\delta = \eps^2$, $A = \frac{1}{4}$
and leads to a system with identity viscosity matrix
$$
\begin{aligned}
\del_t w - \dive S (F) &= \tfrac{\eps}{2}  \Delta w
\\
\del_t F -  \nabla w  &= \tfrac{\eps}{2} \Delta F \, .
\end{aligned}
$$

\section{A discussion on the assumptions on the stored energy}\label{AppC}
We briefly comment on the set of assumptions (A1)-(A4) on $W$, which we rewrite for the reader's convenience. For $p\geq 2$ we assume $W$ satisfies 
\begin{itemize}
\item[(A1)] $W\in C^{2}(\R^{d\times d};\R)$.
\item[(A2)] There exists $c>0$ such that 
\begin{equation*}
c(|F|^p-1)\leq W(F).
\end{equation*}
\item[(A3)] There exists $C>0$ such that 
\begin{equation*}
\begin{aligned}
&|W(F)|\leq C(1+|F|^p);\,&|DW(F)|\leq C(1+|F|^{p-1}).\\
\end{aligned}	
\end{equation*}
\item[(A4)] There exists constant $K>0$ such that 
\begin{equation*}
(DW(F_1)-DW(F_2), F_1-F_2)\geq -K|F_1-F_2|^2.
\end{equation*}
\end{itemize}
We start by remarking that the growth condition on $DW$ in (A3) is redundant if we assume (A4). Indeed, by Lemma \ref{lem:modAB}, we have that $\tilde{W}$ is convex with a $p$-growth, because of the growth of $W$. Then, it is well-known that $D\tilde{W}$ must have a $p-1$-growth,  see for example \cite[Proposition 2.32]{D}, and therefore $DW$ has a $p-1$-growth as well.\par
A technical, yet necessary assumption in the analysis of the uniqueness problem is  \eqref{eq:growthsecond}, that is the following condition on the second derivative on $W$: there exists $C>0$ such that 
\begin{equation}\label{eq:growthsecondapp}
\begin{aligned}
&|D^2W(F)|\leq C(1+|F|^{s}),\mbox{ with }s\geq p-2.
\end{aligned}
\end{equation}
The following example shows that \eqref{eq:growthsecondapp} is not a consequence of (A1)-(A4). 
\begin{example}
Let $n=1$ and $\phi:\R\mapsto \R$ be the function $\phi(x):=e^{\frac{1}{1-|x|^2}}\chi_{[-1,1]}(x)$. Let $k\in\N$ with $k\geq 2$ and define 
\begin{equation*}
\phi_{k}(x):=e^{k}\phi(e^{3k}(x-k)).
\end{equation*}
Note that $supp\,\phi_k=[k-e^{-3k}, k+e^{-3k}]$. Let $g\in C(\R^{+};\R^{+})$ given by 
\begin{equation*}
g(x):=1+\sum_{k=2}^{\infty}\phi_{k}(x),\, x>0.
\end{equation*}
Define $f:\R\to\R$ first by solving for $x>0$ 
\begin{equation}\label{eq:odeC2}
 \begin{aligned}
 &f''(x)=g(x),\quad&f'(0)=0,\quad &f(0)=0,
 \end{aligned}
\end{equation}
and then by extending the resulting $f$ in an even way for negative $x$'s. Then, by construction $f''$ saturates the exponential growth. Moreover, $f\in C^2(\R;\R)$ and it is convex. Therefore it satisfies (A4). It remains to check that the growth and coercivity conditions are satisfied. It is enough to consider the case $x>0$.  Integrating \eqref{eq:odeC2} we have that , since $\phi_k$ is positive for any $k$, 
\begin{equation*}
\begin{aligned}
x\leq f'(x) &\leq x+\sum_{k=2}^{\infty}\int_{k-e^{-3k}}^{k+e^{-3k}}e^{k}\phi(e^{3k}(y-k))\,dy
&\leq x+\sum_{k=2}^{\infty}e^{-2k}\int_{-1}^1\phi(z)\,dz
&\leq C(x+1).
\end{aligned}
\end{equation*}
Then, after integrating again we also get that $\frac{x^2}{2}\leq f(x)\leq C(x^{2}+1)$ and therefore $f$ satisfies also (A2) and (A3). 
\end{example}
We also note that, because of the coercivity assumption (A2), the lower bound on $s$ in \eqref{eq:growthsecondapp} is necessary. 
\begin{lemma}\label{lem:app1}
Let $k\in\N$, $p>k$ and $f\in C^{k}(\R^n;\R)$ $p$-coercive. Then, if there exists $C>0$ and $s\geq 0$ such that $|D^k f(x)|\leq C(1+|x|^s)$ for any $x\in\R^n$, it must hold that $s\geq p-k$.  
\end{lemma}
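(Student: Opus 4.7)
The plan is to proceed by contradiction or, equivalently, to deduce a matching upper bound on $|f|$ from the assumed control on $|D^k f|$, and then compare it with the $p$-coercivity from (A2). The key tool will be the integral form of Taylor's theorem applied at the origin up to order $k$.

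First I would write Taylor's formula with integral remainder: for every $x\in\R^n$,
\[
f(x)=\sum_{j=0}^{k-1}\frac{1}{j!}D^j f(0)[x]^j+\frac{1}{(k-1)!}\int_0^1 (1-t)^{k-1}D^k f(tx)[x]^k\,dt.
\]
The Taylor polynomial has degree $k-1$ and is hence bounded by $C(1+|x|^{k-1})$. For the remainder, the growth hypothesis $|D^k f(y)|\le C(1+|y|^s)$ gives
\[
\left|\int_0^1 (1-t)^{k-1}D^k f(tx)[x]^k\,dt\right|\le C|x|^k\int_0^1(1-t)^{k-1}(1+t^s|x|^s)\,dt \le C'(|x|^k+|x|^{k+s}).
\]
Combining these and using $s\ge 0$ (so that $k+s\ge k > k-1$), one obtains an a priori upper bound
\[
|f(x)|\le C''(1+|x|^{k+s}) \qquad \forall\,x\in\R^n.
\]

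Next I would bring in the coercivity assumption (A2), $f(x)\ge c(|x|^p-1)$, which together with the Taylor bound yields
\[
c(|x|^p-1)\le f(x)\le C''(1+|x|^{k+s})\qquad \forall\,x\in\R^n.
\]
Dividing through by $|x|^{k+s}$ and sending $|x|\to\infty$ forces $|x|^{p-(k+s)}$ to remain bounded at infinity, which is only possible if $p-(k+s)\le 0$, i.e., $s\ge p-k$, which is the desired conclusion.

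No serious obstacle is expected; the only care needed is to keep track of which terms of the Taylor expansion dominate (the polynomial part of degree $k-1$ is absorbed by the $|x|^{k+s}$ term precisely because $s\ge 0$ and $p>k$ guarantees both $k+s\ge k-1$ and $p>k-1$, so the coercivity does not merely saturate the polynomial part of $f$). The argument is purely pointwise and requires no further structure beyond $C^k$ regularity.
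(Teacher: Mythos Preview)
Your proof is correct and follows essentially the same idea as the paper's: bound $|f(x)|$ above by $C(1+|x|^{k+s})$ using the control on $D^k f$, then contradict the $p$-coercivity. The paper reduces to $n=1$ and the case $k=1$ (via the fundamental theorem of calculus and induction on $k$), whereas you carry out the same integration in one step with the multivariate Taylor formula with integral remainder; the arguments are otherwise identical.
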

\begin{proof}
Without loss of generality we can assume that $n=1$, $f$ positive and $x>0$. Moreover, it is enough to prove the lemma for $k=1$. For general $k$ it will follow by induction. Let $p>1$, $f\in C^{1}(\R;\R)$, $C>0$, and $s>0$ such that for any $x>0$
\begin{equation*}
\begin{aligned}
&C(x^p-1)\leq f(x);\\
&|f'(x)|\leq C(1+x^s). 
\end{aligned}
\end{equation*}
Assume there exists $\e>0$ such that $0\leq s=p-1-\e$. Then, for some $C>0$  
\begin{equation*}
C(x^p-1)\leq f(x)\leq \int_0^x|f'(y)|\,dy+C\leq C(1+x+x^{s+1}).
\end{equation*}
Therefore, for $C>0$ we have that for any $x>0$ 
\begin{equation*}
x^{p}\leq C(1+x+x^{p-\e})
\end{equation*}
which is a contradiction since $p>1$. 
\end{proof}

\end{document}